\documentclass{amsart}
\usepackage{graphicx}
\usepackage{amscd}
\usepackage{color}
\usepackage{mathabx}
\sloppy

\newtheorem{theorem}{Theorem}[section]
\newtheorem{lemma}[theorem]{Lemma}
\newtheorem{claim}[theorem]{Claim}
\newtheorem{problem}[theorem]{Problem}

\newtheorem{proposition}[theorem]{Proposition}

\theoremstyle{definition}
\newtheorem{definition}[theorem]{Definition}

\begin{document}

\title[A partial order on multibranched surfaces]{A partial order on multibranched surfaces in 3-manifolds}

\author{Makoto Ozawa}
\address{Department of Natural Sciences, Faculty of Arts and Sciences, Komazawa University, 1-23-1 Komazawa, Setagaya-ku, Tokyo, 154-8525, Japan}
\email{w3c@komazawa-u.ac.jp}
\thanks{The author is partially supported by Grant-in-Aid for Scientific Research (C) (No. 17K05262) and (B) (No. 16H03928), The Ministry of Education, Culture, Sports, Science and Technology, Japan}

\subjclass[2010]{Primary 57M25; Secondary 57M27}

\keywords{multibranched surface, partial order}

\begin{abstract}
In this paper, we introduce a partial order on neighborhood equivalence classes of maximally spread essential multibranched surfaces embedded in a 3-manifold.
We show that if a maximally spread essential multibranched surface is atoroidal and acylindrical, then its equivalence class is minimal with respect to the partial order.
\end{abstract}

\maketitle


\section{Introduction}

\subsection{Definition of multibranched surfaces}

Let $\Bbb{R}^2_+$ be the closed upper half-plane $\{(x_1,x_2)\in \Bbb{R}^2 \mid x_2\ge 0\}$.
The {\em multibranched Euclidean plane}, denoted by $S_i$ $(i\ge 1)$, is the quotient space obtained from $i$ copies of $\Bbb{R}^2_+$ by identifying with their boundaries $\partial \Bbb{R}^2_+=\{(x_1,x_2)\in\Bbb{R}^2\mid x_2=0\}$ via the identity map.
See Figure \ref{model} for the multibranched Euclidean plane $S_5$.

\begin{figure}[htbp]
	\begin{center}
	\includegraphics[trim=0mm 0mm 0mm 0mm, width=.4\linewidth]{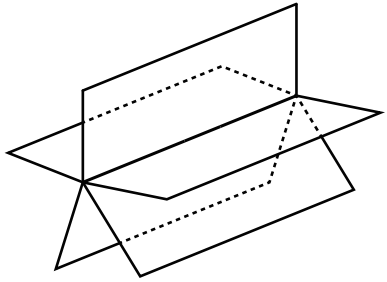}
	\end{center}
	\caption{The multibranched Euclidean plane $S_5$}
	\label{model}
\end{figure}

Informally, a multibranched surface is a space that is modeled on multibranched Euclidean plane.

\begin{definition}
A second countable Hausdorff space $X$ is called a {\em multibranched surface} if $X$ contains a disjoint union of simple closed curves $l_1,\ldots, l_n$ satisfying the following:
\begin{itemize}
\item For each point $x\in l_1\cup \cdots \cup l_n$, there exist an open neighborhood $U$ of $x$ and a positive integer $i$ such that $U$ is homeomorphic to $S_i$.
\item For each point $x\in X-(l_1\cup\cdots\cup l_n)$, there exists an open neighborhood $U$ of $x$ such that $U$ is homeomorphic to $\Bbb{R}^2$.
\end{itemize}
\end{definition}

We call each simple closed curve $l_i$ a {\em branch} and put $B_X=l_1\cup \cdots \cup l_n$.
For a branch $l_i$, we define the {\em degree} of $l_i$, which is denoted by $deg(l_i)$ as the positive integer $k$ if a point $x\in l_i$ has an open neighborhood $U$ such that $U$ is homeomorphic to $S_k$.
We call each component $e_j$ of $X-(l_1\cup\cdots\cup l_n)$ a {\em sector}.
For each sector $e_j$, $\bar{e}_j$ denotes a compact surface with $int\bar{e}_j=e_j$, where $int$ denotes the interior.
Let $E_X$ denote a disjoint union of all compact surfaces $\bar{e}_j$.

\subsection{Object of multibranched surfaces}

Multibranched surfaces naturally arise in several areas:
\begin{itemize}
\item Poly-continuous patterns --- a mathematical model of microphase-separated structures made by block copolymers (\cite{HCO}).
\item 2-stratifolds --- as spines of closed 3-manifolds (\cite{GGH}).
\item Trisections --- as a 4-dimensional analogue of Heegaard splittings (\cite{GK}). (In this case, we need to generalize the ``multibranched 3-dimensional Euclidean space" and define a ``multibranched 3-manifold" similarly.)
\item Links with non-meridional essential surfaces (\cite{EO}).
\end{itemize}

In \cite{MO}, it was shown that every multibranched surface is embeddable into the 4-dimensional Euclidean space $\Bbb{R}^4$, and a necessary and sufficient condition for a multibranched surface to be embeddable into some closed orientable 3-dimensional manifold was given.
In this paper, we consider multibranched surfaces embedded in a closed orientable 3-manifold.
Our goal is the following problem.

\begin{problem}\label{goal}
For a given closed orientable 3-manifold $M$, classify all multibranched surfaces embedded in $M$.
\end{problem}

\subsection{Essential multibranched surfaces}

To approach the goal, first we reduce the object into ``incompressible" one as below.

Let $M$ be a closed orientable 3-manifold and $X$ be a compact multibranched surface embedded in $M$.
We say that an orientable sector $e_j$ is {\em compressible} if there exists a disk $D$ embedded in $M$ such that $D\cap X=D\cap e_j=\partial D$ and $\partial D$ is essential in $e_j$ (i.e. there exists no disk $D'$ in $e_j$ such that $\partial D'=\partial D$).
We call such a disk $D$ a {\em compressing disk}.
(For a non-orientable sector $e_j$, we take a twisted $I$-bundle $e_j\tilde{\times} I$, and $e_j$ is said to be {\em compressible} if $e_j\tilde{\times} \partial I$ is compressible. We note that a compressing disk is not contained in $e_j\tilde{\times} I$.)
If a multibranched surface $X$ is compressible, then we do the following operation which is called a {\em compression} of $X$ along a compressing disk $D$ (Figure \ref{compression}).
\begin{enumerate}
\item Remove $int N(\partial D; e_j)$ from $e_j$, where $N(A;B)$ denotes the regular neighborhood of $A$ in $B$.
\item Paste two copies of $D$ along the boundary of $e_j-int N(\partial D; e_j)$.
\end{enumerate}
Then, we obtain another multibranched surface $X'$ as a result of this compression.
\begin{figure}[htbp]
	\begin{center}
	\includegraphics[trim=0mm 0mm 0mm 0mm, width=.6\linewidth]{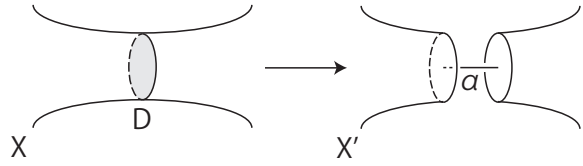}
	\end{center}
	\caption{A compression of $X$ along $D$}
	\label{compression}
\end{figure}

Conversely, we can recover the original multibranched surface $X$ from $X'$ by a {\em tubing} of $X'$ along a ``dual" arc $\alpha$, where $\alpha$ is an arc $\{x\}\times I\subset D\times I\cong N(D)$.
Since $X$ is compact, if we do compressions as much as possible, then we obtain a multibranched surface which is not compressible.
We say that a multibranched surface $X$ is {\em incompressible} in $M$ if it is not compressible.
In this sense, an incompressible multibranched surface can be regarded as a ``basis" of all multibranched surface embedded in $M$ since any multibranched surface can be obtained from incompressible multibranched surfaces by tubings.
According to the above, we restrict multibranched surfaces to be incompressible.

When we consider incompressible surfaces embedded in 3-manifolds, we usually suppose that a surface is boundary-incompressible and not boundary-parallel.
We consider slightly more strong condition on multibranched surfaces as follows.
Let $X$ be an incompressible multibranched surface embedded in a closed orientable 3-manifold $M$.
We say that a sector $e_j$ is {\em excess} if it is boundary-parallel in $M-int N(X-e_j)$.
(For a non-orientable sector $e_j$, we take a twisted $I$-bundle $e_j\tilde{\times} I$, and $e_j$ is said to be {\em excess} if $e_j\tilde{\times} \partial I$ is excess.)
A multibranched surface $X$ is said to be {\em efficient} if every sector is not excess.
We note that inefficient multibranched surfaces can be obtained from efficient multibranched surfaces.

In this paper, we say that a multibranched surface embedded in a closed orientable 3-manifold is {\em essential} if it is incompressible and efficient.
Hereafter, we consider only essential multibranched surfaces.







\subsection{Degree and wrapping number}

Any multibranched surface $X$ can be constructed from a closed 1-manifold $B_X=l_1\cup\cdots\cup l_m$ and a compact surfaces with boundary $E_X=\bar{e}_1\cup \cdots\cup \bar{e}_n$ by identifying via a covering map $f:\partial E_X\to B_X$.
The {\em degree} of a branch $l_i$ is defined as $deg(l_i)=d$ if $f|_{f^{-1}(l_i)}:f^{-1}(l_i)\to l_i$ is a $d$-fold covering.
For each component $C$ of $\partial E_X$, the {\em wrapping number} of $C$ is defined as $wrap(C)=w$ if $f|_{C}$ is a $w$-fold covering.
See Figure \ref{degree}.

\begin{figure}[htbp]
	\begin{center}
	\includegraphics[trim=0mm 0mm 0mm 0mm, width=.6\linewidth]{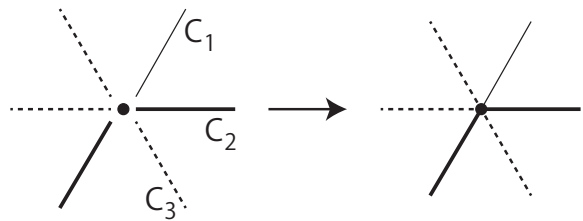}
	\end{center}
	\caption{$deg(l_i)=6$,  $wrap(C_i)=1,\ 2,\ 3$}
	\label{degree}
\end{figure}

We say that a branch $l_i$ is {\em normal} if for each component $C$ of $f^{-1}(l_i)$, $wrap(C)=1$, namely, the number of components of $f^{-1}(l_i)$ is equal to $deg(l_i)$.
We say that a branch $l_i$ is {\em pure} if $f^{-1}(l_i)$ consists of a single component $C$, namely, $wrap(C)=deg(l_i)$.

\subsection{IX-moves and XI-moves}

There are local moves on multibranched surfaces which are analogues to the edge contraction in graph theory.
Suppose that there is an open annulus sector $e_j$ and put $A=\bar{e}_j$ and $\partial A=a_1\cup a_2$.
We assume that at least one boundary component of $A$, say $a_1$, has the wrapping number 1.
We say that $A$ is {\em normal} if both boundary components of $A$ have the wrapping number 1, and that $A$ is {\em quasi-normal} if one boundary component of $A$ has the wrapping number 1 and another boundary component of $A$ has the wrapping number greater than 1.
Suppose that there is an open M\"{o}bius band sector $e_j$ and put $M=\bar{e}_j$.
We say that $M$ is {\em normal} if the boundary of $M$ has the wrapping number 1.
An {\em IX-move} along $\bar{e_j}$ is a deformation retraction of $\bar{e_j}$ onto the core circle (resp. the unnormal branch) if $e_j$ is either a normal annulus sector or a normal M\"{o}bius band sector (resp. a quasi-normal annulus sector) (\cite{IKOS}).
See Figures \ref{IX1}, \ref{IX2} and \ref{IX3} for an IX-move along a normal annulus sector, a quasi-normal annulus sector and a normal M\"{o}bius band sector respectively.
An {\em XI-move} is a reverse operation of an IX-move.

\begin{figure}[htbp]
	\begin{center}
	\includegraphics[trim=0mm 0mm 0mm 0mm, width=.6\linewidth]{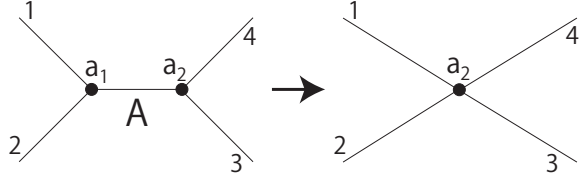}
	\end{center}
	\caption{IX-move along $A$, where $wrap(a_i)=1$ for $i=1,2$.}
	\label{IX1}
\end{figure}

\begin{figure}[htbp]
	\begin{center}
	\includegraphics[trim=0mm 0mm 0mm 0mm, width=.6\linewidth]{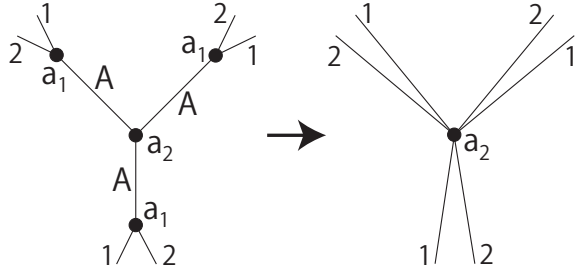}
	\end{center}
	\caption{IX-move along $A$, where $wrap(a_1)=1$ and $wrap(a_2)=3$}
	\label{IX2}
\end{figure}

\begin{figure}[htbp]
	\begin{center}
	\includegraphics[trim=0mm 0mm 0mm 0mm, width=.6\linewidth]{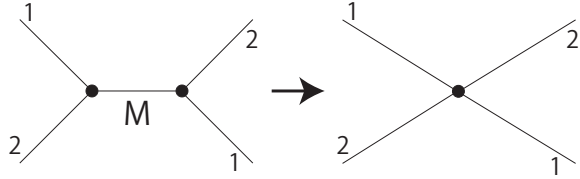}
	\end{center}
	\caption{IX-move along a normal M\"{o}bius band sector}
	\label{IX3}
\end{figure}

\subsection{Neighborhood equivalence of multibranched surfaces}

Let $X, X'$ be multibranched surfaces embedded in a closed orientable 3-manifold $M$.
Suppose that $X'$ is obtained from $X$ by a finite sequence of IX-moves, XI-moves.
Then the regular neighborhood $N(X)$ of $X$ is isotopic to the regular neighborhood $N(X')$ of $X'$.
The following theorem states that the converse holds.

\begin{theorem}[\cite{IKOS}]\label{neighborhood}
Let $X, X'$ be multibranched surfaces embedded in a closed orientable 3-manifold $M$ which have no open disk sector and no branch of degree 1 or 2.
If $N(X)$ is isotopic to $N(X')$ in $M$, then $X$ is transformed into $X'$ by a finite sequence of IX-moves, XI-moves and isotopies in $M$.
\end{theorem}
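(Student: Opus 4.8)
The plan is to reduce the statement to a uniqueness property of ``multibranched spines'' of the $3$-manifold $N(X)$ and then to read the moves off a combinatorial decomposition.

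\emph{Step 1: normalization.} Since $N(X)$ is isotopic to $N(X')$, an ambient isotopy of $M$ carries $N(X')$ onto $N(X)$; replacing $X'$ by its image I may assume $N(X)=N(X')=:N$. Because IX-moves, XI-moves and isotopies can all be performed inside a regular neighborhood of the surface on which they act, it suffices to connect $X$ and $X'$ by such moves supported in $N$. So the theorem becomes: a regular neighborhood $N$ of a multibranched surface has, among surfaces with no disk sector and no branch of degree $\le 2$, a spine unique up to IX/XI-moves and isotopy in $N$.

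\emph{Step 2: the block decomposition attached to a spine, and its dictionary with IX-moves.} For a multibranched surface $Y$ with $N(Y)=N$ I would decompose $N$ into \emph{branch blocks} and \emph{sector blocks}: for each branch $l_i$ of $Y$ the block $V_i$ is a regular neighborhood in $M$ of the circle $l_i$, hence a solid torus, carrying on $\partial V_i$ a disjoint system $\mathcal A_i$ of \emph{attaching annuli} whose cores are isotopic to the core $l_i$ with multiplicity equal to the wrapping number, the number of these annuli counted with multiplicity being $\deg(l_i)\ge 3$; and for each sector $e_j$ the block $P_j$ is the $I$-bundle $\bar e_j\tilde{\times} I$ over the closed-up sector. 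The $P_j$ are glued to the $V_i$ along the annuli of $\mathcal A=\bigcup_i\mathcal A_i$, and $Y$ is recovered from the decomposition as the union of the cores of the $V_i$ with the $0$-sections of the $P_j$. The dictionary entry I would prove is that an IX-move on $Y$ corresponds exactly to the following absorptions: along a normal annulus sector, a block $P_j=A\times I$ with both ends on attaching annuli of wrapping number $1$ together with its two neighbouring $V$-blocks is merged into one solid torus; along a quasi-normal annulus sector, a block $P_j=A\times I$ with one end on an attaching annulus of wrapping number $1$ is merged, together with the neighbour at that end, into the neighbour at the other end; along a normal M\"obius band sector, a twisted $I$-bundle over a M\"obius band with its end on an attaching annulus of wrapping number $1$ is merged with its neighbouring solid torus into one solid torus. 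XI-moves are the reverses. Thus it suffices to prove that the block decompositions of $N$ induced by $X$ and by $X'$ differ by a finite sequence of such absorptions, their reverses, and an isotopy.

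\emph{Step 3: matching the two decompositions, and the main obstacle.} Apply Step 2 to $X$ and to $X'$, obtaining solid tori $\{V_i\}$, $\{V'_k\}$, $I$-bundle blocks $\{P_j\}$, $\{P'_l\}$, and annulus systems $\mathcal A\subset\bigcup_i\partial V_i$, $\mathcal A'\subset\bigcup_k\partial V'_k$ sitting in $N$. The plan is to isotope $\mathcal A'$, carrying the block structure of $X'$ with it, until it is compatible with that of $X$: first one checks that the attaching annuli are, after an isotopy, incompressible in $N$ apart from ones cobounding a solid torus with an annulus in $\partial N$, which can be pushed to a standard position — here the hypotheses ``no disk sector'' and ``degree $\ge 3$'' are what constrain the ways a branch circle of a spine can bound a disk in $N$; then one puts $\mathcal A'$ in minimal position with respect to $\bigcup_iV_i$ and $\bigcup_jP_j$ and removes all intersection curves by innermost-disk and outermost-arc surgeries inside the $V$- and $P$-blocks, the absence of disk sectors being what keeps innermost disks inessential and lets the process terminate on the block structure of $X$; afterwards each component of $\mathcal A'$ lies in a single $P_j$ and is either parallel into $\mathcal A$ or a vertical core of an $I$-subbundle over an annulus or M\"obius band, and comparing the two block structures block by block exhibits their difference as a composition of IX/XI-moves. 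I expect the hard part to be precisely this last matching: proving that after pushing $\mathcal A'$ off the block structure of $X$ one really lands on a decomposition differing from that of $X$ \emph{only} by IX/XI-moves, not on some unrelated reconfiguration. This requires a clean normal-form statement for properly embedded incompressible surfaces in the $I$-bundles $P_j$ rel.\ their vertical boundary (forcing the pieces of $\mathcal A'$ inside a $P_j$ to be horizontal or vertical), together with a careful treatment of compressible branch annuli — and it is exactly in this treatment that ``no open disk sector'' and ``no branch of degree $1$ or $2$'' are used essentially, since an absorbed disk sector or a spurious degree-$2$ branch circle would give spines of the same $N$ that are not IX/XI-equivalent. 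A secondary subtlety, if one routes $X$ and $X'$ through a ``maximally collapsed'' common spine, is confluence: one must verify that collapsing all normal and quasi-normal annulus sectors and all normal M\"obius sectors yields a surface unique up to isotopy.
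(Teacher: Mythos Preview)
The paper does not prove this theorem: it is quoted from \cite{IKOS}, and the only indication of a proof is the sentence ``Theorem~\ref{neighborhood} is proved by showing the next theorem,'' i.e.\ Theorem~\ref{IH}. Thus the paper's route is: apply XI-moves to $X$ and to $X'$ until both are \emph{maximally spread}, then invoke Theorem~\ref{IH} to connect the resulting surfaces by IH-moves (each of which is an IX-move followed by an XI-move); unwinding gives the desired sequence of IX-, XI-moves and isotopies.

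Your proposal is a genuinely different strategy. You attempt a direct comparison of the two characteristic block decompositions $(\{V_i\},\{P_j\},\mathcal A)$ and $(\{V'_k\},\{P'_l\},\mathcal A')$ of $N$, pushing $\mathcal A'$ into normal position relative to the $X$-blocks and reading off IX/XI-moves from the residual configuration. This is a reasonable plan and, if it can be made to work, it would give a more self-contained argument that does not pass through the auxiliary class of maximally spread surfaces. The trade-off is exactly the one you flag: the ``hard part'' in your Step~3 is essentially the content of Theorem~\ref{IH}, and your outline does not yet supply the normal-form lemma for incompressible surfaces in the $I$-bundle blocks (with the correct handling of twisted bundles and of attaching annuli of higher wrapping number) that would force the pieces of $\mathcal A'$ to be horizontal or vertical. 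Without that, the claim that the two block structures differ only by IX/XI absorptions is an assertion, not a proof.

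One further point: your closing remark about routing through a ``maximally collapsed'' common spine goes in the opposite direction from the paper. The normalization actually used in \cite{IKOS} is to \emph{spread} maximally (apply XI-moves), not to collapse; the confluence issue you raise is then about maximal spreading, and it is precisely what Theorem~\ref{IH} settles.
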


We say that a multibranched surface $X$ is a {\em tribranched surface} if for each branch $l_i$ of $X$, $deg(l_i)= 3$.
As stated in \cite{IKOS}, the set of tribranched surfaces in a given 3-manifold $M$ is ``generic'', that is, it forms an open and dense subset in the space of all multibranched surfaces in a suitable sense.
 
We say that a branch is {\em non-spreadable} if it is normal and tribranched, or pure, otherwise we say that it is {\em spreadable}.
Note that each spreadable branch of $X$ admits an XI-move.
By applying XI-moves to $X$ maximally, we get a multibranched surface without spreadable branches.
We call such a multibranched surface a {\em maximally spread surface}.
For a maximally spread surface, an IX-move followed by an XI-move is called an {\it IH-move}. 

Theorem \ref{neighborhood} is proved by showing the next theorem.

\begin{theorem}[\cite{IKOS}]\label{IH}
Let $X$ and $X'$ be maximally spread surfaces in $N$ such that $N$ is a regular neighborhood of each of $X$ and $X'$.
Then $X$ is transformed into $X'$ by a finite sequence of IH-moves and isotopies.
\end{theorem}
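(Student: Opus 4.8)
The plan is to regard both $X$ and $X'$ as spines sitting inside the handle-like decomposition of $N$ determined by $X$, to put $X'$ into a normal position against that decomposition, and then to trade the combinatorial discrepancy between $X$ and the normalized $X'$ for a finite sequence of IH-moves.

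First I would record the \emph{$X$-structure} on $N$. Since $N$ is a regular neighborhood of $X$, write $N=N(B_X)\cup N(E_X)$, where $N(B_X)=\bigsqcup_i V_i$ is a disjoint union of solid tori $V_i=N(l_i)$ with $X\cap V_i$ the ``book of $\deg(l_i)$ pages'' fanning out from the core $l_i$, and $N(E_X)=\bigsqcup_j W_j$, each $W_j$ an $I$-bundle (twisted when $\bar e_j$ is non-orientable) over $\bar e_j$. These pieces meet along a disjoint family $\mathcal A=\bigsqcup_k A_k$ of annuli, where $A_k$ is an annulus lying in some $\partial V_i$, the gluing locus of a boundary circle $C$ of some $\bar e_j$, with $A_k\to l_i$ a $\mathrm{wrap}(C)$-fold covering. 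The family $\mathcal A$, together with a complete meridian system of the solid tori $V_i$, serves as a coordinate system on $N$ against which $X'$ can be measured.

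Next I would isotope $X'$ into \emph{normal position} with respect to the $X$-structure: so that $X'\cap\mathcal A$ consists of circles that are essential (core-parallel) in each $A_k$, so that $X'\cap V_i$ is a book of annuli parallel to $l_i$ sitting inside the fan $X\cap V_i$, and so that $X'\cap W_j$ is carried by the $I$-fibration of $W_j$ (horizontal pieces, with any incidental vertical annuli pushed back into the adjacent $V_i$). I expect this to be the main obstacle. Inessential circles and arcs of $X'\cap\mathcal A$ have to be removed by innermost-disk and outermost-arc arguments, and the straightening inside the $V_i$ and $W_j$ carried out fiber by fiber; the point that makes this go through even though $X'$ is not assumed incompressible is that $N$ collapses onto $X'$, so $N\setminus X'\cong\partial N\times[0,1)$, and this product structure always supplies the compression or boundary-push needed to reduce the intersection, so the reductions never stall.

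Finally I would read off the moves. Once $X'$ is in normal position, collapsing $N$ onto $X$ projects $X'$ to a multibranched surface inside $X$, exhibiting $X'$ as a ``blow-up'' of a subcollection of the pieces of $X$: each $A_k$ is used with some multiplicity $m_k\ge1$, each fan $X\cap V_i$ is regrouped into the branches of $X'$ near $V_i$, and $X'\cap W_j$ is some horizontal family over $\bar e_j$. But $X'$ is itself a spine of $N$, so $X'\hookrightarrow N$ is a homotopy equivalence and every piece must be used efficiently; this forces every $m_k=1$ and forces $X'\cap W_j$ to be a single parallel copy of $\bar e_j$, up to extra normal annulus or M\"{o}bius band sectors lying inside the $V_i$ and $W_j$, each of which an IX-move absorbs. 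The only remaining freedom is the regrouping of each fan $X\cap V_i$ together with the location of those extra annulus and M\"{o}bius band sectors, and each such local change at a single branch is realized by an IH-move. The last point to verify is that these IH-moves can be performed without ever being forced to retain a spreadable branch, using that $X$ and $X'$ are both maximally spread; granting that, composing the IH-moves over all branches and then applying the normalizing isotopy carries $X$ to $X'$.
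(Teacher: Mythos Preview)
This paper does not contain a proof of Theorem~\ref{IH}; the result is quoted from \cite{IKOS} and used as a black box (the paper even says ``Theorem~\ref{neighborhood} is proved by showing the next theorem'' and then cites \cite{IKOS}). So there is nothing in the present paper to compare your proposal against; a genuine comparison would have to be made with \cite{IKOS} itself.

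Taken on its own, your architecture---normalize $X'$ against the handle decomposition $N=N(B_X)\cup_{\mathcal A} N(E_X)$ induced by $X$, then convert the combinatorial discrepancy into a sequence of IH-moves---is the natural strategy and is in the spirit of what the paper does later when it puts $Y$ into ``standard form'' relative to $X$ (Lemma~\ref{standard}). Two points would need tightening before this becomes a proof. First, the implication ``$X'\hookrightarrow N$ is a homotopy equivalence, hence every $m_k=1$ and $X'\cap W_j$ is a single copy'' is not justified by homotopy equivalence alone: parallel annuli and M\"{o}bius bands have Euler characteristic zero, so extra copies are invisible to $\chi$. The correct reason is the stronger fact you already invoked, namely $N\setminus X'\cong\partial N\times[0,1)$, which forbids any complementary region of $X'$ disjoint from $\partial N$ and hence forbids regions trapped between parallel copies. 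Second, ``each such local change at a single branch is realized by an IH-move'' should read ``by a finite sequence of IH-moves'': a single IH-move only slides one page across an annulus or M\"{o}bius band sector, and a general reshuffling of a fan, together with the constraint that every intermediate surface be maximally spread, typically requires several such moves and an argument that the sequence terminates.
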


\subsection{Our objects $\mathcal{X}$}

Hence, to consider Problem \ref{goal}, we restrict multibranched surfaces to the set $\mathcal{X}$ of all connected compact multibranched surfaces $X$ embedded in a closed orientable 3-manifold $M$ satisfying the following conditions:

\begin{enumerate}
\item $X$ is maximally spread.
\item $X$ is essential in $M$.
\item $X$ has no open disk sector.
\item $X$ has no branch of degree 1 or 2.
\end{enumerate}

Under the influence of Theorem \ref{IH}, we define an equivalence relation on $\mathcal{X}$ as follows.
Two multibranched surfaces $X$ and $X'$ in $\mathcal{X}$ are {\em equivalent}, denoted by $X\sim X'$, if $X$ is transformed into $X'$ by a finite sequence of IH-moves.
This equivalence relation is closed in $\mathcal{X}$ as follows.

\begin{proposition}\label{closed}
Let $X, X'$ be multibranched surfaces embedded in a closed orientable 3-manifold $M$.
If $X\in \mathcal{X}$ and $X\sim X'$, then $X'\in \mathcal{X}$.
\end{proposition}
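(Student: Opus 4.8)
The plan is to strip off one IH-move at a time and check that each of the four conditions defining $\mathcal{X}$ is preserved. Unwinding the definition, $X\sim X'$ means that $X'$ is obtained from $X$ by a finite sequence of IH-moves, the intermediate surfaces being maximally spread so that the successive moves are defined; by induction on the length of this sequence we may assume that $X'$ is obtained from $X\in\mathcal{X}$ by a single IH-move, which we write as an IX-move $X\to Y$ followed by an XI-move $Y\to X'$. As observed just before Theorem \ref{neighborhood}, an IX-move and an XI-move each leave the regular neighborhood unchanged up to isotopy, so $N(Y)$ and $N(X')$ are isotopic to $N(X)$ in $M$; this common neighborhood is the main tool in what follows. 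Condition (1), that $X'$ be maximally spread, is built into the notion of an IH-move --- this is precisely the feature used in Theorem \ref{IH} --- so there is nothing to prove for it.

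Conditions (3) and (4) --- no open disk sector, and no branch of degree $1$ or $2$ --- are verified by a local inspection of the branch locus, using Figures \ref{IX1}--\ref{IX3} and their reverses. An IX-move deletes an annulus or M\"{o}bius band sector and amalgamates the incident branches into one; it creates no new sector, hence no disk sector, and the amalgamated branch has degree at least as large as each branch it came from, so $\ge 3$. An XI-move creates exactly one annulus or M\"{o}bius band sector, never a disk, and splits one branch into two; running through the three possibilities and using that all branches of $X$ have degree $\ge 3$, one checks that the two resulting branches again have degree $\ge 3$. Thus $X'$ inherits conditions (3) and (4).

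The substantive point is condition (2), that $X'$ is essential, i.e.\ incompressible and efficient; the idea is to transport compressing disks and boundary-parallelisms across the move by means of the common regular neighborhood. Let $R$ be a regular neighborhood of the sector deleted by the IX-move together with one of the sector inserted by the XI-move, so that outside $R$ the surfaces $X$, $Y$ and $X'$ coincide. Suppose $X'$ had a compressing disk $D$, with $\partial D$ essential in a sector $e'$ of $X'$; isotope so that $\partial D$ is disjoint from $\partial R$. An innermost-disk surgery on the circles of $D\cap\partial R$ either makes $D$ disjoint from $R$, so that $D$ is a compressing disk for $X$ --- impossible, since $X$ is incompressible --- or leaves $\partial D$ isotopic to the core of the annulus inserted by the XI-move. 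In that remaining case one uses that this core is isotopic, within the common regular neighborhood, to a curve parallel to a branch of $Y$; a compression essential along such a curve would yield a compression of $X$ (through $Y$, which has the same regular neighborhood and the same record of which boundary curves are essential in which sector), again a contradiction. Efficiency is handled by the same scheme: a boundary-parallelism in $M-int N(X'-e')$ realizing a sector $e'$ of $X'$ as excess is, after cleanup along $R$, a boundary-parallelism for the corresponding sector of $X$, contradicting efficiency of $X$; and the one genuinely new sector, the inserted annulus, cannot be excess, since a parallelism for it would produce a parallelism for a branch-parallel annulus in $N(X)$ and hence, once more, a violation of the hypotheses on $X$.

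I expect the bookkeeping in condition (2) to be the main obstacle: one must be sure that the data ``which curves are essential in which sector'' and ``which sectors are boundary-parallel'', recorded on $\partial N(X)$, is genuinely unaffected by the reorganization of sectors an IX- or XI-move performs, and that the innermost-disk surgeries leave honest compressing disks and parallelisms rather than inessential ones. The verifications for (1), (3) and (4) are routine inspections of the local moves.
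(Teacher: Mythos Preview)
Your skeleton --- reduce to one IH-move and check (1)--(4) --- matches the paper's exactly, and your remarks on (3) and (4) are fine.

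Two places deserve comment. First, condition (1) is not literally ``built into'' the definition of an IH-move: the paper only requires the \emph{input} to be maximally spread. The paper does supply a one-line argument (a spreadable branch of $X'$ away from the move is already a spreadable branch of $X$; for the branches produced by the XI-step one checks directly), so this is easy to repair, but it is not vacuous.

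Second, for condition (2) your innermost-disk scheme on $\partial R$ is heavier than what the paper does, and that is exactly where the ``bookkeeping'' you worry about lives. The paper's simplification is to view an IH-move not as ``delete an annulus, then insert another'' but as ``slide some sectors across a \emph{fixed} annulus sector $A$'' (this is the content of Lemma \ref{inside} and Figures \ref{IH1}--\ref{IH2}). Under that description the sectors of $X'$ are literally the sectors of $X$; hence a compressing disk for any sector $e_j\ne A$ of $X'$ is already a compressing disk for the same sector of $X$, and $N(X'-e_j)$ is isotopic to $N(X-e_j)$, giving the efficiency statement with no surgery at all. Only the case $e_j=A$ needs separate treatment. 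For compressibility, $\partial D$ is then the core of $A$, parallel to a branch, so $D$ can be pushed onto an adjacent sector of $X$. For efficiency, if $A$ is excess in $X'$ it bounds a solid torus $V$ longitudinally, and the corresponding solid torus in $M-X$ is bounded by annulus sectors of $X$ each running once around $V$, so one of those is excess in $X$. Your $e'=A$ sketch points the right way but is less concrete than this.

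In short, the proposal is correct in outline and close to the paper, but adopting the ``sectors slide, $A$ persists'' picture of an IH-move eliminates the surgery on $\partial R$ and with it the bookkeeping you flagged.
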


\subsection{A partial order on multibranched surfaces}

We define a binary relation $\le$ over $\mathcal{X}$ as follows.

\begin{definition}\label{relation}
For $X,\, Y\in \mathcal{X}$, we denote $X \le Y$ if 
\begin{enumerate}
\item there exists an isotopy of $Y$ in $M$ so that $Y\subset N(X)$ and $B_Y\subset N(B_X)$, and
\item there exists no essential annulus in $N(X)-Y$.
\end{enumerate}
\end{definition}
The second condition (2) says that for any annulus $A$ properly embedded in $N(X)-Y$, either $A$ is compressible in $N(X)-Y$ or $A$ is isotopic to a subannulus of $\partial N(X)$ in $N(X)-Y$.

For equivalence classes $[X], [Y]\in \mathcal{X}/\sim$, we define a binary relation $\preceq$ over $\mathcal{X}/\sim$ so that $[X]\preceq [Y]$ if $X\le Y$.

\begin{proposition}\label{well-defined}
The relation $\preceq$ is well-defined on $\mathcal{X}/\sim$.
\end{proposition}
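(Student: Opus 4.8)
To show $\preceq$ is well-defined on $\mathcal{X}/\!\sim$, I must check that the truth of $X \le Y$ is insensitive to replacing $X$ by an equivalent $X'$ and $Y$ by an equivalent $Y'$. The plan is to reduce everything to two observations: first, that an IH-move does not change the regular neighborhood up to isotopy (this is exactly the content preceding Theorem \ref{neighborhood}, since an IH-move is an IX-move followed by an XI-move, each of which preserves $N(\cdot)$); and second, that an IH-move changes the branch locus only inside a controlled ball, so that $N(B_X)$ is also essentially preserved. Concretely, suppose $X \le Y$, $X \sim X'$ and $Y \sim Y'$; I want $X' \le Y'$.

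The first step handles the $Y$-side. Since $Y \sim Y'$, Theorem \ref{IH} (or rather the remark that IH-moves preserve regular neighborhoods) gives an isotopy of $M$ carrying $N(Y)$ to $N(Y')$; more usefully, $Y'$ is obtained from $Y$ by IH-moves, each supported in a small ball, and each such move keeps the surface inside any fixed regular neighborhood of the old surface while moving $B_Y$ only within $N(B_Y)$. Composing with the isotopy from condition (1) of $X \le Y$, I get an isotopy of $Y'$ into $N(X)$ with $B_{Y'} \subset N(B_X)$. For condition (2), note that $N(X) - Y$ and $N(X) - Y'$ are related by an ambient isotopy of $N(X)$ (the IH-moves on $Y$ can be realized inside $N(X)$), so there is an essential annulus in one iff there is one in the other; hence $X \le Y'$. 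The second step handles the $X$-side: since $X \sim X'$, we have $N(X)$ isotopic to $N(X')$ in $M$, and the pair $(N(X), B_X\text{-neighborhood})$ is carried to $(N(X'), B_{X'}\text{-neighborhood})$ up to isotopy. Transporting the inclusion $Y' \subset N(X)$ and $B_{Y'} \subset N(B_X)$ through this isotopy gives $Y' \subset N(X')$ and $B_{Y'} \subset N(B_{X'})$; essential annuli in the complement are again preserved because the whole configuration is carried by an ambient isotopy. Therefore $X' \le Y'$, as desired, and by symmetry the relation descends to $\mathcal{X}/\!\sim$.

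The main obstacle I anticipate is the bookkeeping around condition (1)'s requirement $B_Y \subset N(B_X)$ under IH-moves: an IH-move on $X$ genuinely moves the branch locus (it slides a branch through a vertex of the dual graph), so I must argue that $N(B_X)$ and $N(B_{X'})$ can be chosen to be isotopic \emph{as subsets of the ambient $M$}, not merely abstractly homeomorphic, and that the isotopy carrying $N(X)$ to $N(X')$ can be taken to carry one to the other simultaneously. This should follow from the local model of an IX-move and XI-move — each is a deformation retraction/expansion supported in a regular neighborhood of the relevant sector, whose closure meets $B_X$ in a prescribed way — but it requires stating precisely that the ambient isotopy realizing $N(X)\simeq N(X')$ restricts to an isotopy of the branch-neighborhoods. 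I would isolate this as a short lemma (or invoke it from \cite{IKOS}) and then the rest of the argument is the routine transport of conditions (1) and (2) along the resulting ambient isotopies.
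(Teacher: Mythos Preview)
Your proposal is correct and follows essentially the same strategy as the paper: reduce well-definedness to a local statement about a single IH-move and then transport conditions (1) and (2) along the resulting sequence of moves. The lemma you ask for at the end is exactly what the paper isolates as Lemma~\ref{inside}: if $Z$ and $Z'$ differ by one IH-move, then $Z'$ can be isotoped so that $Z'\subset N(Z)$ and $B_{Z'}\subset N(B_Z)$, proved directly from the local sliding picture of an IH-move.

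The one organizational difference worth noting is that the paper does \emph{not} package this as the pair-isotopy $(N(X),N(B_X))\simeq (N(X'),N(B_{X'}))$ you propose for the $X$-side. Instead it interleaves the $X$- and $Y$-moves into a single sequence of pairs $(X_i,Y_i)$ and, at each step, uses only the nesting $B_{Z'}\subset N(B_Z)$ from Lemma~\ref{inside} (applied in the appropriate direction) to propagate the condition $B_{Y_i}\subset N(B_{X_i})$. This is slightly easier to verify than your formulation: iterating the inclusion $B_{Z'}\subset N(B_Z)$ does not by itself produce an ambient isotopy carrying $N(B_X)$ onto $N(B_{X'})$, so your version would require a small extra argument (that after each IH-move the new branches are cores of the old branch solid tori, hence the branch-links are isotopic). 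The paper's step-by-step nesting sidesteps this. For condition~(2), both you and the paper observe that the pair $(N(X),N(Y))$ is preserved up to isotopy, so essential annuli in the complement are preserved.
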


The next is a main theorem in this paper.

\begin{theorem}\label{partial}
$(\mathcal{X}/\sim; \preceq)$ is a partially ordered set.
\end{theorem}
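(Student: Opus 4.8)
The plan is to verify the three defining properties of a partial order for $\preceq$ on $\mathcal{X}/\sim$: reflexivity, transitivity and antisymmetry. Two observations are used throughout. First, by Proposition~\ref{well-defined} it suffices, in order to prove $[A]\preceq[B]$, to produce $A'\le B'$ for \emph{some} representatives $A'\sim A$, $B'\sim B$; and by condition~(1) of Definition~\ref{relation} the validity of $A'\le B'$ is unaffected by applying an ambient isotopy of $M$ to $A'$ or to $B'$. Second, for $X\in\mathcal{X}$ the regular neighbourhood $N(X)$ is irreducible, $\partial N(X)$ is incompressible in $M-\operatorname{int}N(X)$ because $X$ is essential, and more generally if $Y\in\mathcal{X}$ lies in $\operatorname{int}N(X)$ then $\partial N(Y)$ is incompressible on both sides; the degenerate $2$-sphere cases are vacuous, since a $2$-sphere is never a sector of a member of $\mathcal{X}$. \emph{Reflexivity} is then immediate: for $X\le X$, condition~(1) holds verbatim since $X$ is a spine of $N(X)$ and $B_X$ a spine of $N(B_X)$, while $N(X)-X\cong\partial N(X)\times[0,1)$ contains no essential annulus, every compact properly embedded incompressible annulus in a half-open product $F\times[0,1)$ being parallel into $F\times\{0\}$. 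Hence $[X]\preceq[X]$.

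For \emph{transitivity}, assume $X\le Y$ and $Y\le Z$. Isotope $Y$ into $\operatorname{int}N(X)$ realising $X\le Y$, set $N(Y)=N(Y;N(X))$ and take it thin near $B_Y$ so that $N(B_Y;N(Y))\subset N(B_X)$, then isotope $Z$ inside $N(Y)$ realising $Y\le Z$; this gives condition~(1) of $X\le Z$. For condition~(2), suppose $A$ is an essential annulus in $N(X)-Z$, and let $S=\partial N(Y)$, which splits $N(X)-Z$ into $W_1\cong N(X)-Y$ and $W_2=N(Y)-Z$. Put $A$ in minimal position with respect to $S$: using incompressibility of $A$ and of $S$ and irreducibility, one may assume every circle of $A\cap S$ is essential in $A$, and by pushing across $S$ that no subannulus of $A$ cut off by $S$ is parallel into $S$. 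If $A\cap S\ne\emptyset$, pick such a subannulus $A_0$; it is incompressible and not parallel into $S$, hence it is an essential annulus in $N(X)-Y$ (if $A_0\subset W_1$: note $A_0$ has a boundary circle on $S\ne\partial N(X)$, so it is not parallel into $\partial N(X)$) or in $N(Y)-Z$ (if $A_0\subset W_2$: it is not parallel into $\partial N(Y)=S$), contradicting $X\le Y$ or $Y\le Z$. So $A\cap S=\emptyset$, and $A$ lies in one of $W_1,W_2$; there it is incompressible, and — after possibly pushing it across $S$ in the case it is parallel into $S$ inside $W_2$ — the same dichotomy again contradicts $X\le Y$ or $Y\le Z$, unless $A$ is parallel into $\partial N(X)$, contradicting that $A$ is essential in $N(X)-Z$. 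Hence $N(X)-Z$ has no essential annulus, $X\le Z$, and $[X]\preceq[Z]$. The only delicate point is keeping track of which boundary surface a given subannulus could be parallel into; the hypotheses $X\le Y$ and $Y\le Z$ together supply exactly the control needed.

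\emph{Antisymmetry} is the substantial part. Suppose $X\le Y$ and $Y\le X$; the goal is $X\sim Y$. Iterating the two inclusions (re-choosing representatives as the first observation allows) produces a descending chain of compact $3$-manifolds $N(X)\supseteq N(Y)\supseteq N(X_1)\supseteq N(Y_1)\supseteq\cdots$, with $X_i\sim X$ and $Y_i\sim Y$, in which each intervening region is a cobordism between two incompressible closed surfaces (successive $\partial N(\,\cdot\,)$'s) containing no essential annulus. It suffices to show the chain stabilises: then some consecutive $N(X_i),N(Y_i),N(X_{i+1})$ coincide, this common manifold is a regular neighbourhood of each of $X_i$ and $Y_i$, and Theorem~\ref{IH} gives $X_i\sim Y_i$, hence $X\sim Y$. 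To force stabilisation one wants a complexity $c\colon\mathcal{X}\to\mathbb{Z}_{\ge 0}$ that is invariant under IH-moves (hence constant along the $X_i$ and along the $Y_i$), non-increasing down the chain, and equal at consecutive neighbours only when the intervening cobordism is a product $\partial N(\,\cdot\,)\times I$; being $\mathbb{Z}_{\ge 0}$-valued and non-increasing it is eventually constant, whence eventually the cobordisms are products and the neighbourhoods agree.

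The construction and analysis of $c$ is the heart of the matter and the step I expect to be hardest. The naive candidate $c(X)=-\chi(X)$ is IH-invariant but too coarse — one can have $X\le Y$ with $X\not\sim Y$ yet $\chi(X)=\chi(Y)$, for instance when $Y$ is a closed surface and $X$ is that surface with a tube-annulus attached along two curves — so one must use a finer invariant, built from the number of branches of $X$ together with the Euler characteristics (equivalently the genera and count) of the sectors $E_X$ and the combinatorics of the gluing along $N(B_X)$. One then has to prove: (i) $c$ is unchanged by an IH-move (a direct check on the moves IX and XI); (ii) if $Y\in\mathcal{X}$ lies in $\operatorname{int}N(X)$ with $B_Y\subset N(B_X)$ and $N(X)-Y$ has no essential annulus, then $c(X)\ge c(Y)$; and (iii) equality in (ii) forces $N(X)-\operatorname{int}N(Y)$ to be a product. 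Steps~(ii) and~(iii) are exactly where the hypotheses ``$X,Y$ essential'' and ``$B_Y\subset N(B_X)$'' are indispensable: they constrain how $Y$ sits inside $N(X)$, forcing $N(X)$ to be obtained from $N(Y)$ by a controlled modification along $\partial N(Y)$ near the branch locus whose size is the complexity gap $c(X)-c(Y)$ and which is trivial precisely when that gap vanishes. Making (ii)–(iii) precise is the main obstacle; in particular the branch-locus condition is what prevents the intervening cobordism from being a non-product acylindrical piece with incompressible boundary of equal genus on both sides (such pieces exist in general and would break the argument).
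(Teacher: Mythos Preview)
Your reflexivity argument is correct and matches the paper. Your transitivity argument follows the same minimal-intersection idea as the paper, though the paper routes it through a ``standard form'' lemma (Lemma~\ref{standard}) and intersects $A$ only with $\partial N(B_Y)\setminus A_Y$ rather than with all of $\partial N(Y)$; either variant can be made to work.

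The genuine gap is antisymmetry. You outline a descent argument via an IH-invariant complexity $c$ but do not construct one, and you explicitly flag steps (ii)--(iii) as ``the main obstacle''. That is not a proof. Moreover, your reason for abandoning $c=-\chi$ is mistaken: in your proposed counterexample ($Y$ a closed surface, $X$ that surface with an annulus sector attached along two curves) the relation $X\le Y$ \emph{fails}, because the thickened annulus sector contributes an essential annulus to $N(X)-Y$, so condition~(2) of Definition~\ref{relation} is violated. In fact $-\chi(E_{\,\cdot\,})$ is exactly the right invariant. Lemma~\ref{euler} shows that $X\le Y$ forces $\chi(E_Y)\le\chi(E_X)$, and that equality implies (a) every non-annulus, non-M\"obius sector of $X$ is covered exactly once by $Y$, and (b) $E_Y\cap N(B_X)$ consists of annuli. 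From $[X]\preceq[Y]\preceq[X]$ and IH-invariance of $\chi$ one gets $\chi(E_X)=\chi(E_Y)$ for free, so (a) and (b) hold.

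What remains --- your step~(iii), showing the cobordism is a product --- is \emph{not} accomplished by any numerical complexity. The paper instead uses the structural information (a)--(b) to replace each non-annular sector of $X$ by a fibred piece, making $N(X)$ into a Seifert fibred space over a base orbifold $B$ with $Y$ vertical; the ``no essential annulus in $N(X)-Y$'' hypothesis then translates, via the projection, into the statement that the complementary regions of the image of $Y$ in $B$ form a collar of $\partial B$, whence $\partial N(Y)$ is parallel to $\partial N(X)$. This Seifert-fibred reduction is the missing idea, and it is where the branch-locus condition $B_Y\subset N(B_X)$ and the maximally-spread hypothesis actually do their work. Searching for a finer combinatorial invariant is the wrong direction.
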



Since the partial order is defined on the quotient set of $\mathcal{X}$ by $\sim$, Theorem \ref{partial} also provides a partial order on the set of all compact orientable 3-manifolds whose spines are multibranched surfaces belong to $\mathcal{X}$.
Then it is natually posed whether it can be generalized to all compact orientable 3-manifolds.

\begin{problem}
Is it possible to define a partial order on the set of all compact orientable 3-manifolds which extends the partially ordered set in Theorem \ref{partial}?
\end{problem}

\subsection{A sufficient condition to be minimal}

Recall that $B_X$ denotes the disjoint union of all branches of a multibranched surface $X$, and  $E_X$ denotes a disjoint union of all compact surfaces $\bar{e}_j$, where $e_j$ is a sector of $X$.

Suppose that $X$ is embedded in a closed orientable 3-manifold $M$.
We say that $B_X$ is {\em toroidal} if there exists an essential torus $T$ in the exterior $E(B_X)$ of $B_X$ in $M$, that is, $T$ is incompressible in $E(B_X)$ and $T$ is not parallel to a torus in $\partial E(B_X)$.
We say that $E_X$ is {\em cylindrical} if there exists an essential annulus $A$ with $A\cap X =A\cap E_X=\partial A$, that is, $A$ is incompressible and $A$ is parallel to neither an annulus in $E_X$ nor an annulus in $\partial E(B_X)$.

\begin{theorem}\label{sufficient}
For equivalence classes $[X], [Y]\in \mathcal{X}/\sim$, 
if $[X]\preceq [Y]$ and $[X]\ne [Y]$, then either $B_Y$ is toroidal or $E_Y$ is cylindrical.
\end{theorem}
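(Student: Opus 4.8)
The plan is to prove the contrapositive: assuming $[X] \preceq [Y]$, $[X] \neq [Y]$, and that $B_Y$ is \emph{not} toroidal, I would produce an essential annulus witnessing that $E_Y$ is cylindrical. The starting point is the data guaranteed by $X \le Y$: after an isotopy we have $Y \subset N(X)$ with $B_Y \subset N(B_X)$, and there is no essential annulus in $N(X) - Y$. The key observation is that $N(X) - Y$ is a compact 3-manifold sitting between $Y$ and $\partial N(X)$, and the failure of $[X]$ and $[Y]$ to coincide must leave some ``room'' between the two surfaces that is not simply a product region. First I would use Theorem~\ref{IH} together with Theorem~\ref{neighborhood}: since both $X$ and $Y$ lie in $\mathcal X$ and $Y \subset N(X)$, if $N(Y)$ were isotopic to $N(X)$ then $X \sim Y$, contradicting $[X]\ne[Y]$. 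Hence $N(Y)$ is \emph{properly} contained in $N(X)$ up to isotopy, so the region $W := N(X) - \mathrm{int}\, N(Y)$ (a compact manifold with $\partial W = \partial N(X) \sqcup \partial N(Y)$) is not a collar of $\partial N(X)$.

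The second step is to analyze $W$. Because $X$ is essential (incompressible and efficient) and $Y \in \mathcal X$, I would argue that $\partial N(Y)$ is incompressible in $W$ (a compression would, via the tubing/compression correspondence of the ``essential'' setup, contradict efficiency or incompressibility of $Y$), and likewise no component of $\partial N(X)$ compresses into $W$. So $W$ is a manifold with incompressible boundary that is not a product; the condition $B_Y \subset N(B_X)$ forces the branch loci to be ``aligned,'' meaning $W$ restricted over a neighborhood of $B_X$ is controlled. The dichotomy I want to extract is standard in the JSJ/Haken spirit: a non-product region with incompressible boundary between two surfaces carries either an essential torus or an essential annulus. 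Condition~(2) of $X \le Y$ already forbids an essential annulus \emph{in $N(X) - Y$}, which is (isotopic to) $W$ union a collar of $Y$; so the only surviving possibility inside $W$ is an essential torus $T$. The crucial point is then to \emph{relocate} this torus: $T$ lies in $N(X)$, and I would push it into the exterior $E(B_X)$ by a general-position argument, showing $T$ can be isotoped off $B_X$ (using that $B_Y \subset N(B_X)$ and the branch structure), so $T$ becomes an essential torus in $E(B_X)$ — i.e. $B_Y$ (equivalently $B_X$, and then $B_Y$ since $B_Y \subset N(B_X)$ is isotopic in the relevant exterior) is toroidal, contrary to hypothesis.

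Once the torus case is excluded, I am forced into the annulus case. Here the subtlety is that condition~(2) only rules out annuli in $N(X)-Y$, not annuli of the type $A \cap X = A \cap E_X = \partial A$ appearing in the definition of ``cylindrical'': such an annulus has its boundary \emph{on} $E_X$, hence genuinely meets $X$, so it is not an obstructed object. The plan is to take the non-product region $W$ (now known to contain no essential torus), invoke the characterization of such regions to get an essential annulus $A'$ in $W$, and then use an innermost/outermost argument on $A' \cap \partial N(Y)$ together with the structure of $Y$ inside $N(X)$ to surger $A'$ into an annulus $A$ meeting $Y$ exactly in $E_Y$-boundary — concretely, pushing one side of $A'$ across the $I$-bundle $N(Y)$ to land on $E_X$. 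Verifying that the resulting $A$ is incompressible and parallel to neither a sub-annulus of $E_X$ nor one of $\partial E(B_X)$ is the bookkeeping that establishes ``$E_Y$ is cylindrical.''

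**Main obstacle.** The hard part is the relocation steps: showing the essential torus obtained in $W$ can be isotoped to lie in $E(B_X)$ (and hence detects toroidality of $B_Y$), and dually, converting a generic essential annulus in $W$ into one of the restricted form in the definition of cylindrical without destroying essentiality. Both require careful general-position and innermost-disk arguments using the branched structure of $X$ and the containment $B_Y \subset N(B_X)$, and this is where the branch loci (and the degree $\ge 3$, no-disk-sector hypotheses defining $\mathcal X$) must really be used — a naive application of the 3-manifold JSJ dichotomy to $W$ alone would not see the branches at all.
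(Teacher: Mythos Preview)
Your plan has two genuine gaps that cannot be filled without essentially reverting to the paper's argument.

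\textbf{The dichotomy is unjustified.} The claim that a non-product irreducible region $W$ with incompressible boundary must contain an essential torus or an essential annulus is false in general: any hyperbolic $3$-manifold with totally geodesic boundary is a counterexample. What makes the present situation special is that $N(X)$ is a graph manifold --- the characteristic annulus system $A_X$ cuts it into solid tori $N(B_X)$ and $I$-bundles $N(E_X)$ --- and this is precisely what the paper exploits. The paper first puts $Y$ in standard form (Lemma~\ref{standard}), so that $Y\cap N(E_X)$ is horizontal and $Y\cap N(B_X)$ is controlled; it then collapses every non-annular, non-M\"obius sector to $C_d\times S^1$, obtaining a Seifert fibered space $S=N(X^*)$ with base orbifold $B$ in which $Y$ becomes a fibered subcomplex projecting to a graph $p(Y^*)\subset B$. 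All candidate annuli are then \emph{vertical}, of the form $p^{-1}(\alpha)$ for arcs $\alpha\subset B$, and the proof reduces to the combinatorics of the regions of $B\setminus N(p(Y^*))$. Your plan never invokes the standard form or the Seifert structure, so you have no mechanism to exclude a hyperbolic $W$; the ``JSJ/Haken'' dichotomy you want only becomes available \emph{after} this reduction.

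\textbf{The relocation steps do not preserve essentiality.} Even granting an essential torus $T\subset W$, you need $T$ essential in $E(B_Y)=M\setminus\mathrm{int}\,N(B_Y)$, which contains all of $M\setminus N(X)$; a compressing disk for $T$ could live there, and nothing you have set up prevents that. (You also drift between $B_X$ and $B_Y$; the conclusion concerns $B_Y$.) The paper never manufactures a torus: it \emph{assumes} $B_Y$ atoroidal and uses that each $\partial N(l_i)$ must then be inessential in $E(B_Y)$, forcing the branches of $Y$ to be cores of the $N(l_i)$. The annulus case is worse: ``$E_Y$ cylindrical'' requires $A\cap Y=A\cap E_Y=\partial A$ together with non-parallelism into $E_Y$ and into $\partial E(B_Y)$, whereas an annulus in $W$ has its boundary on $\partial N(Y)$, and pushing it across $N(Y)$ will in general drag $\partial A$ over branches. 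The paper sidesteps this by constructing the witnessing annuli explicitly --- vertical annuli between adjacent parallel copies of a sector in $N(F)$, or $p^{-1}(\alpha)$ for a suitable arc $\alpha$ in a non-product complementary region $R'\subset B$ --- so that $\partial A\subset E_Y$ and the non-parallelisms hold by construction.
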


Theorem \ref{sufficient} provides a sufficient condition for an equivalent class $[X]\in \mathcal{X}/\sim$ to be minimal with respect to the partial order of $(\mathcal{X}/\sim; \preceq)$, that is, if $B_X$ is atoroidal and $E_X$ is acylindrical, then $[X]$ is minimal.

\section{Preliminaries}

Recall that $B_X$ denotes the disjoint union of all branches of a multibranched surface $X$, and  $E_X$ denotes a disjoint union of all compact surfaces $\bar{e}_j$, where $e_j$ is a sector of $X$.
Since a multibranched surface $X$ is a union of $B_X$ and $E_X$, we can regard $N(X)$ as a union of $N(B_X)$ and $N(E_X)$ with $N(B_X)\approx B_X \times D^2$, $N(E_X)\approx E_X\times I$ (for a non-orientable sector, we take a twisted $I$-bundle), where $D^2$ denotes the closed disk and $I=[-1,1]$, $A \approx B$ means that $A$ is homeomorphic to $B$.
The intersection $N(B_X)\cap N(E_X)$ consists of annuli $A_X$, which are called the {\it characteristic annulus system}.

\begin{figure}[htbp]
	\begin{center}
	\includegraphics[trim=0mm 0mm 0mm 0mm, width=.4\linewidth]{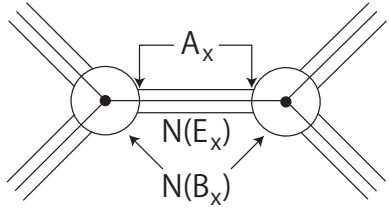}
	\end{center}
	\caption{The characteristic annulus system}
	\label{characteristic}
\end{figure}

\begin{proposition}\label{annulus}
Let $X\in \mathcal{X}$.
The characteristic annulus system $A_X$ is essential in $N(X)$, and $N(X)$ is irreducible and boundary-irreducible.
\end{proposition}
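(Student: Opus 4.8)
The plan is to exploit the handle-like decomposition $N(X)=N(B_X)\cup N(E_X)$, in which $N(B_X)$ is a disjoint union of solid tori $N(l_i)$, each piece $N(\bar e_j)$ of $N(E_X)$ is an $I$-bundle (twisted when $\bar e_j$ is non-orientable) over a sector, and the two are glued along the characteristic annuli $A_X$; I use that $X$ is incompressible, has no open disk sector and no branch of degree $1$ or $2$, and is maximally spread. The basic observation is that a characteristic annulus $A$ between a branch $l_i$ and a sector $e_j$ is incompressible in each of the two pieces containing it: its core runs $\mathrm{wrap}(C)\ge 1$ times around $l_i$, hence is non-trivial in $\pi_1(N(l_i))\cong\mathbb Z$, and it is a boundary curve of $\bar e_j$, hence non-trivial in $\pi_1(N(\bar e_j))\cong\pi_1(\bar e_j)$ since $\bar e_j$ is not a disk. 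Given a compressing disk $D$ for a component of $A_X$, put $D$ in general position with respect to $A_X$; an innermost circle of $D\cap A_X$ on $D$ bounds a subdisk lying in a single piece with boundary on a characteristic annulus, necessarily inessential by the observation, so it is removed by an isotopy. After clearing all circles, $D$ lies in one piece and compresses a characteristic annulus there — contradiction. Hence $A_X$ is incompressible. Then, since each piece is a solid torus, a handlebody, or a twisted $I$-bundle, hence irreducible, the same innermost-disk argument pushes any $2$-sphere in $N(X)$ off $A_X$ into a single piece where it bounds a ball; so $N(X)$ is irreducible, and as the gluing annuli are incompressible the graph-of-groups structure shows each piece is $\pi_1$-injective in $N(X)$.

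For boundary-irreducibility, take a compressing disk $D$ for $\partial N(X)$ with $|D\cap A_X|$ minimal and $\partial D$ meeting $\partial A_X$ minimally. Innermost circles of $D\cap A_X$ are cleared as above. If $D\cap A_X$ contained an arc, an outermost one cuts off a subdisk $D_0$ in a single piece, bounded by a subarc of a characteristic annulus and a subarc of $\partial N(X)$: if the first arc is inessential in its annulus, or $\partial D_0$ is inessential in the boundary torus of a solid-torus piece, one reduces $|D\cap A_X|$; otherwise, in $N(l_i)$ the curve $\partial D_0$ is isotopic to the non-trivial core of a characteristic annulus yet bounds $D_0$, a contradiction, and in a bundle over an orientable sector the second arc would have to join the two disjoint horizontal copies of $\bar e_j$, which is impossible (the non-orientable case being handled identically on the double cover $\bar e_j\widetilde\times\partial I$). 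So $D$ lies in a single piece; if a solid torus, $\partial D$ is isotopic to a core of a characteristic annulus, non-trivial in $N(l_i)$ and hence in $N(X)$, contradicting that it bounds $D$; if a bundle $N(\bar e_j)$, then $\partial D$ is essential in the horizontal boundary $\bar e_j\times\partial I$ (or $\bar e_j\widetilde\times\partial I$), so after clearing any further intersections of $D$ with the core $\bar e_j$, $D$ becomes a compressing disk for the sector $e_j$, contradicting incompressibility of $X$. Thus $N(X)$ is irreducible and boundary-irreducible.

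The hard part is to show that no component $A$ of $A_X$ is boundary-parallel. Suppose it were, via a product region $W\cong A\times I$ with $A=A\times\{0\}$ and $(A\times\{1\})\cup(\partial A\times I)\subset\partial N(X)$; then $W$ is a solid torus. After minimizing $|W\cap A_X|$, every component of $W\cap A_X$ other than $A$ is a properly embedded incompressible annulus in $W$, hence boundary-parallel there, and its boundary curves lie on the annulus $(A\times\{1\})\cup(\partial A\times I)\subset\partial W$, so it is isotopic to $A$ in $W$; therefore $W\cap A_X$ can be arranged as parallel ``horizontal'' copies $A\times\{0=t_0<t_1<\cdots<t_k\}$, cutting $W$ into product regions each contained in a single complementary piece $Q$ of $A_X$. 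Consider the topmost region $P=A\times[t_k,1]\subset Q$: its boundary is the characteristic annulus $A\times\{t_k\}\subset\partial Q$ together with an annulus $B^{\flat}=(A\times\{1\})\cup(\partial A\times[t_k,1])\subset\partial N(X)\cap Q\subset\partial Q$, and this torus, sitting inside the connected surface $\partial Q$, must equal it, so $P=Q$. Hence $Q\cong A\times I$ is a solid torus presented as a product over $A\times\{t_k\}$, so the core of $A\times\{t_k\}$ is a longitude of $Q$ and $B^{\flat}$ is its complementary annulus in $\partial Q$; in particular $A\times\{t_k\}$ is the only characteristic annulus on $\partial Q$. If $Q=N(l_i)$, this forces $l_i$ to be pure, so $\mathrm{wrap}(C)=\deg(l_i)\ge 3$, while the product structure forces $\mathrm{wrap}(C)=1$, a contradiction. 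If $Q=N(\bar e_j)$, then $\chi(\partial Q)=2\chi(\bar e_j)=0$, so $\bar e_j$ is an annulus or a M\"obius band; an annulus sector has two characteristic annuli, and for a M\"obius band the core of its characteristic annulus (the boundary curve of the band) wraps twice, not once, around the core of $Q$ — again a contradiction. This completes the proof that $A_X$ is essential. The main obstacle is precisely this last case: the cut-and-paste bringing $W\cap A_X$ into horizontal position, and the case analysis of the topmost piece $Q$, require care, whereas the rest of the argument is routine $3$-manifold surgery.
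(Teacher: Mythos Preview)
Your overall strategy coincides with the paper's: split $N(X)$ along $A_X$ into solid tori $N(l_i)$ and $I$-bundles $N(\bar e_j)$, then run innermost-circle and outermost-arc arguments. Incompressibility of $A_X$ and irreducibility of $N(X)$ are handled essentially identically in both.

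There is one genuine slip in your boundary-irreducibility argument. When the outermost bigon $D_0$ lies in a solid-torus piece $N(l_i)$ with its arc $\alpha=D_0\cap A_X$ essential in a characteristic annulus, you assert that ``$\partial D_0$ is isotopic to the non-trivial core of a characteristic annulus''. That is false: an essential arc in an annulus is a \emph{spanning} arc, so $\partial D_0$ meets the core transversely in a single point and cannot be isotopic to it. Since $\partial D_0$ bounds the disk $D_0$ in the solid torus, it is the meridian of $N(l_i)$; the core of a characteristic annulus has wrapping number $\ge 1$ and is never meridional. The paper's argument here is the correct one: a meridian of $N(l_i)$ must cross the characteristic annuli on $\partial N(l_i)$ in $\deg(l_i)\ge 3$ essential arcs altogether, whereas $\partial D_0$ contributes only one, a contradiction. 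The repair is a single line, but the step as you wrote it does not go through.

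By contrast, your treatment of the boundary-parallel case is considerably more careful than the paper's. The paper disposes of it in one sentence (``this implies that the branch $l_i$ with $A\subset\partial N(l_i)$ has degree $1$''), tacitly assuming the parallelism region lies inside $N(l_i)$. You instead analyse how $W$ meets $A_X$, normalise $W\cap A_X$ to horizontal slices, and show that the topmost slab must fill an entire piece $Q$, then rule out both $Q=N(l_i)$ and $Q=N(\bar e_j)$. This is correct and actually covers a case the paper glosses over; what you call ``the hard part'' is simply treated with more rigour in your write-up than in the paper's one-liner.
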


\begin{proof}
Suppose that $A_X$ is inessential in $N(X)$.
Then a component $A$ in $A_X$ is compressible or boundary-parallel in $N(X)$.
Let $D$ be a compressing disk for $A$.
Since the core of $A$ is not isotopic to a meridian of $N(B_X)$, it is not contractible in $N(B_X)$.
Hence $D$ is contained in $N(E_X)$.
But this implies that the sector $e_j$ with $D\subset N(\bar{e}_j)$ is a disk.
It contradicts the condition (3) of $\mathcal{X}$.
Next suppose that $A$ is boundary-parallel in $N(X)$.
This implies that the branch $l_i$ with $A\subset \partial N(l_i)$ has degree 1.
It also contradicts the condition (4) of $\mathcal{X}$.

Suppose that $N(X)$ is reducible.
Then there exists an essential 2-sphere $S$ in $N(X)$.
If $S\cap A_X=\emptyset$, then $S$ is contained in $N(B_X)$ or $N(E_X)$.
Since both $N(B_X)$ and $N(E_X)$ are irreducible, $S$ bounds a 3-ball in $N(X)$.
This is a contradiction.
Otherwise, we isotope $S$ so that $S$ intersects $A_X$ in loops, and assume that the number of components $|S\cap A_X|$ is minimal among all essential 2-spheres.
If there is a loop of $S\cap A_X$ which is inessential in $A_X$, then by cutting and pasting $S$ along an innermost disk in $A_X$, we obtain another essential 2-sphere $S'$ with $|S'\cap A_X|<|S\cap A_X|$.
This contradicts the minimality of $|S\cap A_X|$.
Hence, all loops of $S\cap A_X$ are essential in $A_X$.
Let $\delta$ be an innermost disk in $S$ with respect to $S\cap A_X$.
If $\delta$ is contained in $N(B_X)$, then we have a contradiction since $\partial \delta$ is an essential, non-meridional loop in $\partial N(B_X)$.
If $\delta$ is contained in $N(E_X)$, then this implies the sector is an open disk.
This contradicts that $X$ has no open disk sector, and contradicts the condition (3) of $\mathcal{X}$.

Next, suppose that $N(X)$ is boundary-reducible.
Then there exists an essential disk $D$ in $N(X)$.
If $D\cap A_X=\emptyset$, then $D$ is contained in $N(B_X)$ or $N(E_X)$.
Since both $\partial N(B_X)-A_X$ and $\partial N(E_X)-A_X$ are incompressible, $\partial D$ bounds a disk in $\partial N(X)$.
This contradicts the condition of $\mathcal{X}$.
Otherwise, we isotope $D$ so that $D$ intersects $A_X$ in loops and arcs, and assume that $|D\cap A_X|$ is minimal among all essential disks in $N(X)$.
If there is a loop or arc of $D\cap A_X$ which is inessential in $A_X$, then by cutting and pasting $D$ along an innermost disk or outermost disk in $A_X$, we obtain another essential disk $D'$ with $|D'\cap A_X|<|D\cap A_X|$.
This contradicts the minimality of $|D\cap A_X|$.
Hence, all loops and arcs of $D\cap A_X$ are essential in $A_X$.
Let $\delta$ be an innermost disk in $D$ with respect to $D\cap A_X$.
If $\delta$ is contained in $N(B_X)$, then we have a contradiction since $\partial \delta$ is an essential, non-meridional loop in $\partial N(B_X)$.
If $\delta$ is contained in $N(E_X)$, then this implies the sector is an open disk.
This contradicts that $X$ has no open disk sector.
Let $\delta$ be an outermost disk in $D$ with respect to $D\cap A_X$.
If $\delta$ is contained in $N(B_X)$, then it is a meridian disk of $N(B_X)$ which intersects the characteristic annulus in a single arc.
This implies that the degree of the branch is equal to 1, and contradicts the condition (4) of $\mathcal{X}$.
If $\delta$ is contained in $N(E_X)$, then by cutting $A_X$ along $\partial \delta\cap A_X$ and pasting two parallel copies of $\delta$, we obtain a compressing disk for $\partial N(E_X)-A_X$, or a component of $N(E_X)$ is a solid torus with $\delta$ as a meridian disk.
In both cases, we have a contradiction.
\end{proof}

\begin{proof}[Proof of Proposition \ref{closed}]
It is sufficient to show that if $X'$ is obtained from $X$ by a single IH-move along a normal or quasi-normal annulus or normal M\"{o}bius band sector $A$, then $X'\in\mathcal{X}$.

(1) Suppose that $X'$ has a spreadable branch.
Then $X$ has also a spreadable branch and this contradicts that $X$ is maximally spread.

(2) Suppose that $X'$ is compressible.
Then there exists a compressing disk $D$ for some sector $e_j$ of $X'$.
If $e_j$ is not $A$, then $D$ is also a compressing disk for some sector of $X$ corresponding to $e_j$.
This contradicts that $X$ is incompressible.
Otherwise, $D$ is a compressing disk for $A$.
In this case, we can isotope $D$ off $A$ so that it is a compressing disk for another sector which shares same branch with $A$.
Thus it is in the previous case and we have a contradiction.

Suppose that $X'$ is inefficient and a sector $e_j$ of $X'$ is excess.
If $e_j$ is not $A$, then $e_j$ is also excess in $X$ since $N(X'-e_j)$ is isotopic to $N(X-e_j)$.
This contradicts that $X$ is efficient.
Otherwise, $A$ is boundary-parallel in $M-int N(X'-A)$.
This implies that there is a solid torus region $V$ in $M-X'$ and $A$ goes around a longitude of $V$ exactly once.
There is also a solid torus region $V$ in $M-X$ which corresponds to $V$, and each sector goes around a longitude of $V$ exactly once.
It follows that an annulus sector in $\partial V$ is excess. See Figure \ref{excess}.
This contradicts that $X$ is efficient.
We remark that in the latter case, $A$ is not a quasi-normal annulus.

\begin{figure}[htbp]
	\begin{center}
	\includegraphics[trim=0mm 0mm 0mm 0mm, width=.45\linewidth]{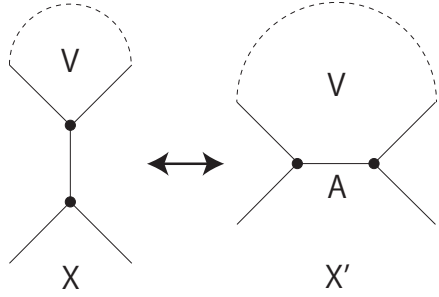}
	\end{center}
	\caption{The solid torus region $V$}
	\label{excess}
\end{figure}

(3) Suppose that $X'$ has an open disk sector.
Then $X$ has also an open disk sector, and this is a contradiction.

(4) Suppose that $X'$ has a branch of degree 1 or 2.
Then $X$ has also a branch of degree 1 or 2, and this is a contradiction.
\end{proof}

The next lemma is used several times.

\begin{lemma}[{\cite[Proposition 3.1]{W}}]\label{product}
Let $F$ be an orientable surface which is not the 2-sphere and $I$ be the closed interval $[0,1]$.
If $G$ is an incompressible surface embedded in $F\times I$ with $\partial G\subset F\times \{0\}$, then $F$ is parallel to a surface in $F\times \{0\}$.
\end{lemma}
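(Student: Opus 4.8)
The plan is to place $G$ in a standard position with respect to the height function $h\colon F\times I\to I$, $h(x,t)=t$, and to remove all of its critical points by exploiting two consequences of the hypothesis $F\neq S^2$: first, $F\times I$ is irreducible (since $\pi_2(F)=0$), and second, every level surface $F\times\{t\}$ is incompressible in $F\times I$, because the inclusion is $\pi_1$-injective. After a preliminary isotopy I would arrange that $h|_G$ is a Morse function with distinct critical values, that $\partial G=G\cap(F\times\{0\})$ is the minimum level $h|_G^{-1}(0)$, and that $G$ is disjoint from $F\times\{1\}$ and from $\partial F\times I$ (automatic, since $\partial G\subset F\times\{0\}$).

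First I would show that in a position minimizing the number of critical points of $h|_G$, no regular level circle is inessential in its level surface. Indeed, an innermost inessential circle $c$ in a regular level $F\times\{t\}$ bounds a disk $\delta$ there whose interior is disjoint from $G$; since $G$ is incompressible, $c$ also bounds a disk $\delta'$ in $G$, so $\delta\cup\delta'$ is a $2$-sphere, which bounds a ball $B$ by irreducibility of $F\times I$. Pushing $\delta'$ across $B$ past the level $F\times\{t\}$ is an isotopy of $G$ that removes the capped-off critical points, strictly decreasing the number of critical points and contradicting minimality.

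Next I would rule out interior maxima and minima. A local maximum of $h|_G$ at height $t_0$ produces, just below it, a small circle $c$ bounding a cap disk $D\subset G$ lying above $F\times\{t_0-\varepsilon\}$; thus $D$ would be a compressing disk for the level surface $F\times\{t_0-\varepsilon\}$ unless $c$ is inessential there, which is excluded by the previous paragraph together with the incompressibility of levels. Hence $h|_G$ has no interior local maxima, and symmetrically no interior local minima. A compactness argument then finishes the proof: as $G$ is compact and $h|_G$ has no interior local maximum, $h|_G$ attains its maximum on $\partial G\subset h^{-1}(0)$, forcing $h|_G\equiv 0$; equivalently, $G$ can be isotoped into an arbitrarily thin collar of $F\times\{0\}$. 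Being horizontal, $G$ then cuts off (componentwise) a product region below it over the subsurface $G_0\subset F\times\{0\}$ onto which it projects, so $G$ is parallel to $G_0$.

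The main obstacle is the bookkeeping in the cancellation step: one must ensure that the innermost-disk surgeries and the cap-pushing isotopies can always be performed disjointly from the rest of $G$ and that each genuinely lowers a fixed complexity—the number of critical points of $h|_G$, or lexicographically the number of intersection circles with the critical levels—so that the process terminates. Controlling the interior intersections of the level disk $\delta$ with $G$ by always selecting $c$ innermost, and checking that every move strictly decreases this complexity, is the delicate part; everything else is formal once irreducibility of $F\times I$ and incompressibility of its level surfaces are in hand.
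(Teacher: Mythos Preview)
The paper does not give its own proof of this lemma; it simply quotes Waldhausen \cite[Proposition~3.1]{W} and uses it as a black box. So there is nothing in the paper to compare your argument against, and the relevant question is whether your sketch stands on its own.

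It does not, and the gap is exactly where you flag it. Your step~(a) asserts that in a position minimizing the number of critical points of $h|_G$ every regular level circle of $G$ is essential in its level surface, because an innermost inessential one can be capped off with a disk $\delta$ in the level, matched with a disk $\delta'\subset G$, and then $\delta'$ can be pushed across the $3$-ball bounded by $\delta\cup\delta'$ to \emph{strictly reduce} the number of critical points. That last claim is false in general. Take $F$ a closed orientable surface of positive genus and $G=F\times\{1/2\}$, perturbed so that $h|_G$ is Morse; the minimal number of critical points is then the Morse number of $F$, which is positive, and the small circle just below the global maximum is certainly inessential in its level. Pushing the cap $\delta'$ across the ball $B$ does not delete the index-$2$ critical point: it merely slides the maximum to a lower height (or, depending on how you perturb, converts it into an index-$0$ point), leaving the total count unchanged. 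So step~(a) fails, and with it step~(b).

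There is also a sanity check that shows the argument cannot be repaired with the same complexity: if it worked, your ``compactness argument'' would force $h|_G\equiv 0$, i.e.\ $G\subset F\times\{0\}$, which is impossible for a properly embedded surface with nonempty interior; in the closed case it would force $G=\emptyset$. The missing ingredient is a treatment of saddle (index~$1$) critical points, which your outline never addresses. Waldhausen's original proof proceeds by a hierarchy/innermost--outermost argument rather than by counting Morse critical points; if you want a Morse-theoretic proof you need a more refined complexity (for instance, first the number of components of $G\cap(F\times\{t\})$ for a generic $t$, then something finer) together with an analysis of how each saddle interacts with the incompressibility of both $G$ and the level surfaces.
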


\section{Proof of Theorem \ref{partial}}

We first show the well-definedness.

\begin{proof}[Proof of Proposition \ref{well-defined}]
Suppose that $[X]\preceq [Y]$, then $X\le Y$.
Let $X'\in[X]$ and $Y'\in[Y]$, then $X$ and $Y$ is transformed into $X'$ and $Y'$ respectively by a finite sequence of IH-moves.
We need to show that $X'\le Y'$.

Since $Y\subset N(X)$, $Y$ can be transformed into $Y'$ in $N(X)$ by a finite sequence of IH-moves.
By Theorem \ref{neighborhood}, $N(X)$ is isotopic to $N(X')$ in $M$.
Hence, we have $Y'\subset N(X')$.
We should check that the condition $B_Y\subset N(B_X)$ can be kept during these deformations.
Let $X=X_1,\ldots, X_n=X'$ and $Y=Y_1,\ldots,Y_m=Y'$ be the finite sequence of IH-moves, and $(X_1,Y_1),\ldots,(X_{m+n},Y_{m+n})$ be a sequence of pairs which includes those sequences such that exactly one of the following cases holds.
\begin{description}
\item[Case 1] $X_{i+1}$ is obtained from $X_i$ by an IH-move, and $Y_{i+1}=Y_i$.
\item[Case 2] $Y_{i+1}$ is obtained from $Y_i$ by an IH-move, and $X_{i+1}=X_i$.
\end{description}

We need the next lemma.

\begin{lemma}\label{inside}
Suppose that two multibranched surfaces $Z$ and $Z'$ in $\mathcal{X}$ are related by a single IH-move.
Then $Z'$ can be isotoped locally so that $Z'\subset N(Z)$ and $B_{Z'}\subset N(B_Z)$.
\end{lemma}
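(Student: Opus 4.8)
The plan is to exploit that an IH-move is a \emph{local} modification, supported in an arbitrarily small neighbourhood of the sector $A$ along which the constituent IX-move is performed, and then to read off the two containments from the model. Write the IH-move as an IX-move along a normal annulus, a quasi-normal annulus, or a normal M\"obius band sector $A$ of $Z$, followed by an XI-move. The IX-move is a deformation retraction of $\bar{A}$ onto a circle $c$ (the core of $A$ in the annulus and M\"obius cases, the unnormal branch met by $A$ in the quasi-normal case), so the intermediate surface $Z_0$ satisfies $Z_0\subset Z$. The ensuing XI-move must be carried out at $c$: it is the only branch whose neighbourhood was altered, and since $Z'\in\mathcal{X}$ the XI-move must respread exactly this branch. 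An XI-move at $c$ can be realized inside an arbitrarily thin tubular neighbourhood $N(c;M)$ of $c$ in $M$, with the new branch(es) and the new sector contained in $N(c;M)$ and the re-routing of the sector-ends meeting $c$ supported there. Choosing $N(c;M)$ inside the interior of $N(Z)$, which is possible because $c\subset Z$, we obtain a surface $Z'_0$ with $Z'_0\subset Z\cup N(c;M)\subset N(Z)$ which agrees with $Z$ outside a neighbourhood of $A\cup c$. Thus the passage from $Z$ to $Z'_0$ is local and $Z'_0\subset N(Z)$ already.

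What remains, and this is the real content, is to move the new branches into $N(B_Z)$: a priori they lie near $c$, which for an annulus sector is in the interior of a sector and hence not in $B_Z$. Here I would use that $\bar{A}$ meets $B_Z$: each new branch of $Z'_0$ is joined to one of the branches $l_i$ of $Z$ abutting $A$ by an embedded annulus running parallel to a subannulus of $A$, hence lying in $N(\bar{A};M)\subset N(Z)$. Sliding each new branch along such an annulus into $N(l_i;M)\subset N(B_Z)$, dragging along the attached sector-ends and the new sector, is a local isotopy during which the dragged pieces remain inside $N(\bar{A};M)\cup\bigcup_i N(l_i;M)\subset N(Z)$. For a quasi-normal sector the slide is unnecessary, since there $c$ is already the branch $l_2\subset B_Z$; for a M\"obius sector the core $c$, and the boundary of any new M\"obius sector, runs twice around the relevant branch, but this only means the new branch lands in $N(l_1;M)$ as a $2$-cable, which is harmless. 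Writing $Z'$ for the result, one checks that $Z'$ coincides with $Z$ outside the small neighbourhood $V:=N(\bar{A};M)\cup\bigcup_i N(l_i;M)$; that $Z'\cap V\subset V\subset N(Z)$, so $Z'\subset N(Z)$; and that $B_{Z'}$ consists of the new branches, contained in $\bigcup_i N(l_i;M)$, together with the branches of $Z$ other than those abutting $A$, so $B_{Z'}\subset N(B_Z)$.

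The step I expect to be the main obstacle is the bookkeeping guaranteeing the slide can actually be performed in every type of IH-move: one must track the wrapping numbers of the sector-ends as they are redistributed by the XI-move and confirm that, in $\pi_1$ of the solid torus $N(\bar{A};M)$, the class of each new branch is compatible with being pushed into some $N(l_i;M)$. The delicate case is the M\"obius sector, where the collapse doubles wrapping numbers and a naive new branch could carry an odd class over $c$ and be unroutable; the resolution is that the defining conditions of $\mathcal{X}$ — no branch of degree $1$ or $2$, together with maximal spreadedness, which forces the branches abutting $A$ to be normal and tribranched, or pure — sharply restrict which XI-moves remain in $\mathcal{X}$, and in every surviving case the new branches carry an even class over $c$ and are therefore routable into $N(B_Z)$. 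Once this case analysis is settled the remainder is routine.
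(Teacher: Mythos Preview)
Your argument is correct, but it takes a longer route than the paper's. The paper observes directly that an IH-move along $A$ can be realised as a \emph{slide of sector-ends along $A$}: rather than first collapsing $A$ to its core $c$ (pushing all branches away from $B_Z$) and then re-spreading and sliding back, one simply keeps the branches of $Z'$ near the original branches $l_1,l_2$ of $Z$ throughout and redistributes the attached sector-ends by dragging them across $A$. This picture (the paper's Figures for the normal and quasi-normal annulus cases) makes both containments $Z'\subset N(Z)$ and $B_{Z'}\subset N(B_Z)$ immediate, with no after-the-fact repair.

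The practical difference is exactly the bookkeeping you flag as the main obstacle. By passing through the intermediate surface $Z_0$ with branch at $c$, you are forced to analyse which homotopy classes in the solid torus $N(\bar A)$ the re-spread branches can carry, and then to invoke the constraints of $\mathcal{X}$ (maximal spreadness, no low-degree branches) to rule out the bad cases---your M\"obius discussion being the extreme instance. The paper's slide description never leaves $N(B_Z)$, so this case analysis simply does not arise: the M\"obius case is dismissed as ``similar'' because the same slide picture applies with $\partial M$ playing the role of the annulus boundary. Your approach has the virtue of being more explicit about what an IH-move is as a composition, and your verification that the $\mathcal{X}$-constraints save the day is a genuine check; but if you adopt the slide viewpoint from the start, the lemma becomes essentially a one-line observation plus a picture.
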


\begin{proof}
Suppose that $Z'$ is obtained from $Z$ by a single IH-move along a normal annulus sector, a quasi-normal annulus sector or a normal M\"{o}bius band sector $A$.
It can be observed that $Z'$ is a result by sliding sectors of $Z$ along $A$.
See Figures \ref{IH1} and \ref{IH2} for an IH-move along a normal annulus sector and a quasi-normal annulus sector respectively.
The case of an IH-move along a normal M\"{o}bius band sector is similar.
\begin{figure}[htbp]
	\begin{center}
	\includegraphics[trim=0mm 0mm 0mm 0mm, width=.4\linewidth]{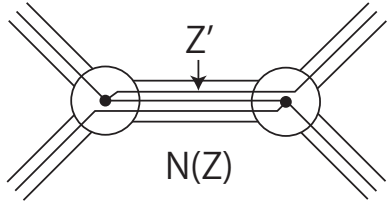}
	\end{center}
	\caption{IH-move along a normal annulus sector}
	\label{IH1}
\end{figure}
\begin{figure}[htbp]
	\begin{center}
	\includegraphics[trim=0mm 0mm 0mm 0mm, width=.5\linewidth]{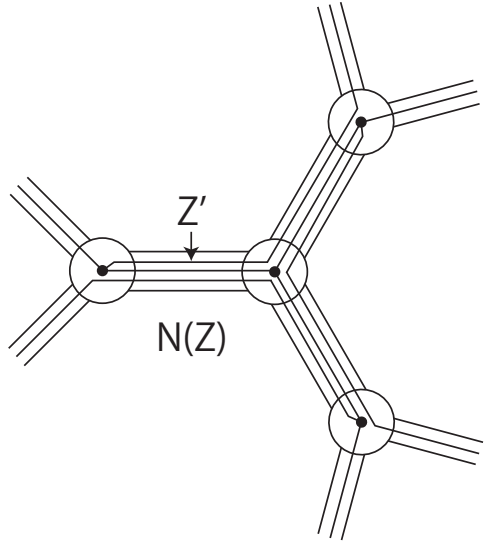}
	\end{center}
	\caption{IH-move along a quasi-normal annulus sector}
	\label{IH2}
\end{figure}
\end{proof}

We continue the proof of Proposition \ref{well-defined}.

In Case 1, by Lemma \ref{inside}, $X_i$ can be isotoped locally so that $X_i\subset N(X_{i+1})$ and $B_{X_i}\subset N(B_{X_{i+1}})$.
Since $Y_i\subset N(X_i)$ and $B_{Y_i}\subset N(B_{X_i})$, it follows that $Y_i\subset N(X_{i+1})$ and $B_{Y_i}\subset N(B_{X_{i+1}})$.

In Case 2, by Lemma \ref{inside}, $Y_{i+1}$ can be isotoped locally so that $Y_{i+1}\subset N(Y_i)$ and $B_{Y_{i+1}}\subset N(B_{Y_i})$.
Since $Y_i\subset N(X_i)$ and $B_{Y_i}\subset N(B_{X_i})$, it follows that $Y_{i+1}\subset N(X_i)$ and $B_{Y_{i+1}}\subset N(B_{X_i})$.

Therefore, the first condition (1) of Definition \ref{relation} is satisfied.
The second condition (2) of Definition \ref{relation} holds since the pair $(N(X), N(Y))$ is isotopic to $(N(X'), N(Y'))$.
\end{proof}

Next we prepare some lemmas to prove Theorem \ref{partial}.
The following lemma is important since it arranges ordered multibranched surfaces.

\begin{lemma}[Standard form]\label{standard}
Suppose that for $X,\, Y\in \mathcal{X}$, $X \le Y$.
Then $Y$ can be isotoped in $M$ so that the following conditions hold.
\begin{enumerate}
\item $Y\subset N(X)$ and $B_Y\subset N(B_X)$.
\item There exists no essential annulus in $N(X)-Y$.
\item $Y\cap N(E_X)$ consists of surfaces with a form $E_X\times \{x\}$.
\item $Y\cap N(B_X)$ is incompressible in $N(B_X)$ and no component of $Y\cap N(B_X)$ is parallel into $A_X$.
\end{enumerate}
\end{lemma}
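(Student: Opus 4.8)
The plan is to establish the four conditions of the standard form one at a time, starting from an isotopy furnished by the hypothesis $X\le Y$ and successively improving it, always checking that earlier conditions survive each subsequent modification. Conditions (1) and (2) are immediate: they are exactly Definition \ref{relation}. The substance is in achieving (3) and (4) without destroying (1) and (2). I would first work on (4), since controlling $Y\cap N(B_X)$ inside the solid-torus pieces $N(B_X)\approx B_X\times D^2$ is a prerequisite for the product structure in (3). Each component $S$ of $Y\cap N(B_X)$ is a surface properly embedded in $B_X\times D^2$. If $S$ is compressible in $N(B_X)$, I compress it; this is realized by an ambient isotopy of $Y$ together with a ``tube-swallowing'' move, or more precisely, since $Y\in\mathcal X$ is essential and $N(X)$ is irreducible and boundary-irreducible by Proposition \ref{annulus}, any compression of $Y\cap N(B_X)$ can be pushed through so that $Y$ is isotoped to remove the compressing disk's effect while staying inside $N(X)$. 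Repeating, $Y\cap N(B_X)$ becomes incompressible in $N(B_X)$. Then, if some component is parallel into $A_X$, the parallelism region is a product $S\times I$ in $N(B_X)$ with one end on $A_X$, and I push $Y$ across this region into $N(E_X)$, reducing $|Y\cap A_X|$; iterate until no component of $Y\cap N(B_X)$ is parallel into $A_X$. This process terminates by minimality of $|Y\cap A_X|$.

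Next I would establish (3). After (4), consider $Y\cap N(E_X)$, where $N(E_X)\approx E_X\times I$. Since $Y$ is incompressible in $M$ (it is essential) and $N(X)$ is irreducible, $Y\cap N(E_X)$ is incompressible in $N(E_X)$; its boundary lies on $A_X\cup(\partial N(X)\cap N(E_X))$. I want each component of $Y\cap N(E_X)$ to be a horizontal copy $E_X\times\{x\}$. Here the key input is Lemma \ref{product} applied to the product $E_X\times I$ (or twisted $I$-bundles for non-orientable sectors, treated via the orientation double cover): an incompressible surface in a product over a non-sphere base whose boundary sits in one end is parallel to that end. To apply it I first need to arrange that $\partial(Y\cap N(E_X))$, on each annulus component of $A_X$, either misses $A_X$ or is parallel into one side; the argument of (4) together with an innermost-disk/outermost-arc argument on $A_X$ handles inessential intersection curves, and condition (2) — no essential annulus in $N(X)-Y$ — rules out the troublesome essential spanning annuli between $Y$ and $\partial N(X)$ that would otherwise obstruct pushing components to be horizontal. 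Once every component is horizontal, $Y\cap N(E_X)$ is a union of surfaces $E_X\times\{x\}$, and by a final isotopy supported in $N(E_X)$ I can make the $x$-values distinct; conditions (1), (2), (4) are preserved because all isotopies were supported in $N(E_X)$ or pushed curves off $A_X$ in a controlled way.

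The main obstacle I anticipate is the interaction between conditions: each time I simplify $Y\cap N(E_X)$ toward horizontality I may create new intersections with $A_X=N(B_X)\cap N(E_X)$, potentially violating the incompressibility or the no-parallel-into-$A_X$ clause of (4) that I established earlier. The remedy is to set up a single complexity — most naturally the lexicographic pair $(|Y\cap A_X|,\ -\chi(Y\cap N(E_X))$ summed appropriately, or the number of non-horizontal components) — and show that every corrective move strictly decreases it, so the whole procedure terminates at a configuration satisfying (1)–(4) simultaneously. Verifying that condition (2) is genuinely used, and not merely convenient, is the delicate point: an essential annulus in $N(X)-Y$ is exactly the obstruction to straightening a component of $Y\cap N(E_X)$ that runs from $A_X$ back to $A_X$ or to $\partial N(X)$ in a non-product way, so its absence is what lets Lemma \ref{product} finish the job. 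I would therefore organize the proof as: (i) fix the isotopy from $X\le Y$; (ii) minimize $|Y\cap A_X|$ and make $Y\cap N(B_X)$ incompressible and not parallel into $A_X$; (iii) apply Lemma \ref{product} componentwise in $N(E_X)$, using (2) to exclude the bad cases; (iv) spread the horizontal copies to distinct levels; (v) check no step undoes a previous one, invoking the complexity to guarantee termination.
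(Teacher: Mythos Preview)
Your plan would eventually succeed, but it is considerably more involved than necessary, and it carries a misconception about the role of condition~(2).

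The paper's argument is a single stroke. After (1) and (2) are arranged from the definition, one uses only that $Y$ is incompressible and that $A_X$ is essential in $N(X)$ (Proposition~\ref{annulus}) to isotope $Y$ so that $Y\cap A_X$ consists of loops essential in both $Y$ and $A_X$, and then simply takes $|Y\cap A_X|$ minimal. These two moves already force $Y\cap N(B_X)$ and $Y\cap N(E_X)$ to be incompressible in their respective pieces (the standard fact that an incompressible surface meeting an essential surface in essential curves has incompressible pieces after cutting), and minimality of $|Y\cap A_X|$ kills any component parallel into $A_X$. Lemma~\ref{product} then yields (3) directly. There is no separate treatment of (4) before (3), no lexicographic complexity, and no iterated ``compress, then push off'' procedure on $Y\cap N(B_X)$: the global incompressibility of $Y$ does all the work at once, so the interaction problem you anticipate never arises.

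More importantly, your claim that condition~(2) is ``genuinely used'' and is ``exactly the obstruction to straightening'' components of $Y\cap N(E_X)$ is not right. The paper never invokes (2) to obtain (3) or (4); it is simply preserved because the isotopies are supported in $N(X)$. Once $Y\cap N(E_X)$ is incompressible with boundary consisting of core-parallel loops on $A_X=\partial E_X\times I$, Waldhausen's result already forces each component to be a horizontal copy of $E_X$; there is no non-product alternative left for (2) to exclude. Organizing your argument around (2) would have you hunting for an obstruction that does not exist.
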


\begin{proof}
Suppose that for $X,\, Y\in \mathcal{X}$, $X \le Y$.
By Definition \ref{relation}, $Y$ can be isotoped in $M$ so that the conditions (1) and (2) are satisfied.
In the following, we isotope $Y$ in $N(X)$ under holding (1) and (2) so that the conditions (3) and (4) are satisfied.

By the incompressibility of $Y$ and Proposition \ref{annulus}, $Y$ can be isotoped in $N(X)$ so that $Y$ intersects the characteristic annulus system $A_X$ only in loops which are essential in both $Y$ and $A_X$.
Then, $Y\cap N(E_X)$ and $Y\cap N(B_X)$ is incompressible in $N(E_X)$ and $N(B_X)$ respectively.
We assume that $|Y\cap A_X|$ is minimal.
Then, there is no component of $Y\cap N(E_X)$ or $Y\cap E(B_X)$ parallel into $A_X$.
By Lemma \ref{product}, each component of $Y\cap N(E_X)$ is parallel to $E_X$.
Therefore, the conditions (3) and (4) are satisfied.
\end{proof}

We say that $Y$ is in a {\em standard form} if the conditions (1)-(4) of Lemma \ref{standard} are satisfied.
The next lemma is fundamental since it is useful to order multibranched surfaces by the Euler characteristic of sectors.

\begin{lemma}\label{euler}
Suppose that for $X,\, Y\in \mathcal{X}$, $X \le Y$.
Then $\chi(E_Y)\le \chi(E_X)$.
Moreover $\chi(E_Y)= \chi(E_X)$ if and only if 
\begin{enumerate}
\item For any sector $F\in E_X$ except for annulus or M\"{o}bius band sectors, $E_Y\cap N(F)$ consists of a single surface.
\item $E_Y\cap N(B_X)$ consists of annuli.
\end{enumerate}
\end{lemma}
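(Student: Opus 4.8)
The plan is to isotope $Y$ into standard form (Lemma~\ref{standard}) and then compute $\chi(E_Y)$ by cutting along the characteristic annulus system $A_X$. The starting point is the identity $\chi(Z)=\chi(E_Z)$ for every multibranched surface $Z$: the branch locus of $Z$ and its preimage under the attaching covering map are unions of circles, so they contribute nothing to the Euler characteristic. In particular $\chi(Y)=\chi(E_Y)$ and $\chi(X)=\chi(E_X)$, so it suffices to work with $E_Y$. With $Y$ in standard form, $Y\cap A_X$ consists of circles, disjoint from $B_Y$, hence lying in $E_Y$; thus $E_Y$ is the union of $E_Y\cap N(B_X)$ and $E_Y\cap N(E_X)$ glued along these circles, and (Mayer--Vietoris) $\chi(E_Y)=\chi(E_Y\cap N(B_X))+\chi(E_Y\cap N(E_X))$. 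I would bound each term separately and then read off the equality case.

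For the $N(E_X)$-part: $E_Y\cap N(E_X)=Y\cap N(E_X)$ is, by condition (3) of the standard form, a disjoint union of parallel copies of the components $F$ of $E_X$; let $n_F$ be the number of copies of $F$. None of the $F$ is a disk (condition (3) of $\mathcal X$) and none is a sphere (each has boundary), so $\chi(F)\le 0$ for all $F$. I would first check that $n_F\ge 1$ whenever $\chi(F)<0$: if $Y$ missed the product region $N(F)=F\times I$ for such an $F$, then, using the product structure of $N(F)$ together with the complementary pieces of $Y$ in the solid tori of $N(B_X)$ adjacent to $F$ — here the clause of the standard form that no component of $Y\cap N(B_X)$ is parallel into $A_X$ prevents those pieces from being collars — one produces a properly embedded essential annulus in $N(X)-Y$, contradicting $X\le Y$. (When $\chi(F)=0$ the value of $n_F$ is irrelevant below.) Granting this, $\chi(E_Y\cap N(E_X))=\sum_F n_F\chi(F)\le\sum_F\chi(F)=\chi(E_X)$, with equality precisely when $n_F=1$ for every $F$ with $\chi(F)<0$, i.e.\ for every $F$ that is neither an annulus nor a M\"obius band; that is exactly condition (1).

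For the $N(B_X)$-part: $E_Y\cap N(B_X)$ is the disjoint union of the components $c$ of $(Y\cap N(B_X))\setminus B_Y$, each a compact surface whose boundary circles map to circles of $Y\cap A_X$ or to branches in $B_Y$. No $c$ is a disk: a disk with boundary on $A_X$ would be a compressing disk for a curve essential in $A_X$ (essential by the standard form), contradicting that $A_X$ is incompressible in $N(X)$ (Proposition~\ref{annulus}); a disk with boundary on $B_Y$ is disjoint from $A_X$, hence is itself an open disk sector of $Y$, contradicting condition (3) of $\mathcal X$. Likewise no $c$ is a sphere (else $Y$ would be a $2$-sphere lying in a solid torus of $N(B_X)$, and $A_X$ pushed off $Y$ would be an essential annulus in $N(X)-Y$). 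Hence $\chi(E_Y\cap N(B_X))=\sum_c\chi(c)\le 0$, with equality exactly when every $c$ is an annulus or a M\"obius band; and a M\"obius-band component is excluded, since its boundary circle lies on $A_X$ (forcing a longitude of a solid torus to bound a M\"obius band, impossible) or on $B_Y$ (making it a M\"obius-band sector of $Y$ contained in a solid torus of $N(B_X)$, which is boundary-parallel there and hence excess, contradicting that $Y$ is efficient). So equality here is exactly condition (2).

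Adding the two estimates gives $\chi(E_Y)=\chi(E_Y\cap N(B_X))+\chi(E_Y\cap N(E_X))\le 0+\chi(E_X)=\chi(E_X)$, and equality holds if and only if both summands are extremal, i.e.\ if and only if (1) and (2) both hold. I expect the one substantive geometric point to be the production of the essential annulus in $N(X)-Y$ out of a component $F$ of $E_X$ that $Y$ misses (the claim $n_F\ge1$ for $\chi(F)<0$); everything else is bookkeeping with Euler characteristics, the identity $\chi(Z)=\chi(E_Z)$, and elementary facts about disks, spheres and M\"obius bands in a solid torus. The ruling out of disk and M\"obius components of $Y\cap N(B_X)$ is also where the hypotheses ``no open disk sector'' and ``efficient'' genuinely enter, so some care is needed there as well.
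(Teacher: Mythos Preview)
Your overall strategy is the paper's: put $Y$ in standard form, split $\chi(E_Y)=\chi(E_Y\cap N(E_X))+\chi(E_Y\cap N(B_X))$ along $A_X$, bound each summand, and read off the equality case. You are in fact more explicit than the paper about why $n_F\ge 1$ when $\chi(F)<0$ and about why disk and sphere components of $E_Y\cap N(B_X)$ cannot occur.

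The one substantive divergence, and the place where your argument has a gap, is the exclusion of M\"obius-band components of $E_Y\cap N(B_X)$. In your case ``boundary on $A_X$'' you say this would force ``a longitude of a solid torus to bound a M\"obius band''; but the core of a component of $A_X$ on $\partial N(l)$ is a longitude only when $l$ is a \emph{normal} branch --- for a pure branch of degree $d\ge 3$ that core winds $d$ times longitudinally --- so your sentence does not cover pure branches. In your case ``boundary on $B_Y$'' you claim the resulting M\"obius-band sector of $Y$ is ``boundary-parallel'' in $N(l)$ and hence excess; a M\"obius band is never parallel into a torus boundary, and in any event ``excess'' is tested in the complement of $N(Y-e_j)$ in the ambient $3$-manifold, not in $N(l)$, so this step does not go through as written.

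The paper's argument at this point is different and cleaner, and it is precisely where the hypothesis that $X$ is \emph{maximally spread} enters (a hypothesis you never invoke). An incompressible M\"obius band properly embedded in the solid torus $N(l)$ has boundary a $(2,q)$-curve; since all cores of $A_X\cap\partial N(l)$ are mutually parallel on the torus $\partial N(l)$, this forces $wrap(C)=2$ for \emph{every} component $C$ of $f^{-1}(l)$. Then $deg(l)$ is an even integer $\ge 4$, so $l$ is neither normal (some wrapping number exceeds $1$) nor pure (there are at least two such $C$), hence $l$ is spreadable --- contradicting that $X$ is maximally spread.
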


\begin{proof}
By Lemma \ref{standard}, we may assume that $Y$ is isotoped to be in a standard form.

Consider the following diagram, where $p$ is a covering map by Lemma \ref{standard} (3).
$$
E_Y\supset E_Y\cap N(E_X) \xrightarrow{p} E_X
$$
From this diagram, we have the following inequality.
$$
\chi(E_Y)\le \chi(E_Y\cap N(E_X)) \le \chi(E_X)
$$
For the first inequality, we have the next equality more precisely.
$$
\chi(E_Y)= \chi(E_Y\cap N(E_X))+\chi(E_Y\cap N(B_X)),
$$
For the second inequality, $\chi(E_Y\cap N(E_X)) = \chi(E_X)$ if and only if $p$ is injective for a sector $F\in E_X$ except for an annulus or a M\"{o}bius band sectors.

It follows from these observations that $\chi(E_Y)= \chi(E_X)$ if and only if $\chi(E_Y\cap N(B_X))=0$, namely $E_Y\cap N(B_X)$ consists of annuli or M\"{o}bius bands, and for any sector $F\in E_X$ except for annulus or M\"{o}bius band sectors, $E_Y\cap N(F)$ consists of a single surface.

Finally we show that $E_Y\cap N(B_X)$ does not have a M\"{o}bius band component.
Suppose that there is a branch $l\in B_X$ such that $E_Y\cap N(l)$ contains a M\"{o}bius band.
Then for each component $C$ of $f^{-1}(l)$, $wrap(C)=2$, where $f:\partial E_X\to B_X$ is a covering map.
Since $X\in\mathcal{X}$, $deg(l)\ge 3$.
Hence $deg(l)$ is an even integer greater than 3, and $l$ is unnormal and not pure.
This implies that $l$ is spreadable and contradicts that $X$ is maximally spread.
\end{proof}

\begin{proof}[Proof of Theorem \ref{partial}]

We need to show that:

\begin{description}
\item[Reflexivity] For any $X\in \mathcal{X}$, $[X]\preceq [X]$.
\item[Antisymmetry] For $X,Y\in \mathcal{X}$, if $[X]\preceq [Y]$ and $[Y]\preceq [X]$, then $[X]=[Y]$.
\item[Transitivity] For $X,Y,Z\in \mathcal{X}$, if $[X]\preceq [Y]$ and $[Y]\preceq [Z]$, then $[X]\preceq [Z]$.
\end{description}

\medskip
Reflexivity: 
For any $X\in \mathcal{X}$, it holds that $X\subset N(X)$ and $B_X\subset N(B_X)$ without an isotopy of $X$ in $M$.
Suppose that there exists an essential annulus $A$ in $N(X)-X$.
Since $N(X)-X$ is homeomorphic to a product $\partial N(X)\times (0,1]$, by Lemma \ref{product}, $A$ is parallel into $\partial N(X)$ in $N(X)-X$.
This is a contradiction.

\medskip
Transitivity: 
Suppose that for $X,Y,Z\in \mathcal{X}$, $[X]\preceq [Y]$ and $[Y]\preceq [Z]$.
Since $Y \le Z$, by Definition \ref{relation}, there exists an isotopy of $Z$ in $M$ so that $Z\subset N(Y)$ and $B_Z\subset N(B_Y)$, and there exists no essential annulus in $N(Y)-Z$.
By Lemma \ref{standard}, we may assume that $Z$ is in a standard form for $Y$.
Moreover, since $X \le Y$, there exists an isotopy of $N(Y)$ in $M$ so that $N(Y)\subset N(X)$ and $N(B_Y)\subset N(B_X)$, and there exists no essential annulus in $N(X)-N(Y)$.
By Lemma \ref{standard}, we may assume that $Y$ is in a standard form for $X$.
Hence $Z\subset N(X)$ and $B_Z\subset N(B_X)$, thus the condition (1) of Definition \ref{relation} is satisfied.
Next we show that there exists no essential annulus in $N(X)-Z$.
Suppose that there exists an essential annulus $A$ in $N(X)-Z$.
Since there exists no essential annulus in $N(X)-N(Y)$, $A$ must intersect $\partial N(Y)$.

\begin{claim}\label{arrange}
$A$ can be isotoped in $N(X)-Z$ so that $A\cap \partial N(Y)=A\cap (\partial N(B_Y)-A_Y)$.
Moreover, if we take $|A\cap (\partial N(B_Y)-A_Y)|$ minimal up to isotopy, $A\cap (\partial N(B_Y)-A_Y)$ consists of loops which are essential in both $A$ and $\partial N(B_Y)-A_Y$.
\end{claim}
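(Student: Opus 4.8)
The plan is to normalize the essential annulus $A$ with respect to $\partial N(Y)$ through a sequence of isotopies inside $N(X)-Z$, using: the incompressibility of $A$ in $N(X)-Z$; the irreducibility and boundary-irreducibility of $N(X)$, $N(Y)$, $N(Z)$ (Proposition \ref{annulus}); the product structure $N(E_Y)\approx E_Y\times I$ together with the fact that $Z$ is in standard form relative to $Y$, so that $Z\cap N(E_Y)$ consists of copies $E_Y\times\{x\}$ (Lemma \ref{standard}(3)); Lemma \ref{product}; and the two hypotheses ``no essential annulus in $N(X)-N(Y)$'' (from $X\le Y$) and ``no essential annulus in $N(Y)-Z$'' (from $Y\le Z$).

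First I would put $A$ in transverse position to $\partial N(Y)$. As we may take $Z\subset \mathrm{int}\,N(Y)\subset \mathrm{int}\,N(X)$, the surface $\partial N(Y)$ lies in the interior of $N(X)-Z$, so $A\cap\partial N(Y)$ is a disjoint union of loops; isotope $A$ in $N(X)-Z$ to make $|A\cap\partial N(Y)|$ minimal. Standard innermost-disk arguments --- if a loop of $A\cap\partial N(Y)$ bounds a disk $\delta$ in $A$ with interior missing $\partial N(Y)$, then $\delta$ lies in $N(Y)-Z$ or in $N(X)-N(Y)$, and incompressibility of $\partial N(Y)$ in $N(Y)$ together with irreducibility of $N(X)$ and $N(Z)$ lowers the intersection number --- then show that at the minimum no loop of $A\cap\partial N(Y)$ bounds a disk in $A$. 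Consequently these loops cut $A$ into parallel sub-annuli, each lying in $N(Y)-Z$ or in $N(X)-N(Y)$.

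Next I would remove the intersections of $A$ with $\partial_E:=\partial N(E_Y)-A_Y$. If a sub-annulus $A'\subset A$ lies in $N(Y)-Z$ with $\partial A'\subset\partial N(Y)$, then $A'$ is incompressible in $N(Y)-Z$ (a compression of $A'$ would compress $A$), hence boundary-parallel there since $N(Y)-Z$ has no essential annulus; pushing $A$ across the parallelism removes two loops from $A\cap\partial N(Y)$, against minimality, so no such $A'$ exists. After also minimizing $|A\cap A_Y|$ (again only essential loops and arcs remain, by incompressibility of $A$ and Proposition \ref{annulus}), each component $P$ of $A\cap N(E_Y)$ is an incompressible surface in a product piece $E_Y\times J$ of $N(E_Y)-Z$; if $\partial P$ meets $\partial_E$, then Lemma \ref{product} (applied to $P$ after closing up its $A_Y$-boundary by vertical annuli) forces $P$ to be parallel into an end $E_Y\times\{\pm1\}$, and the parallelism isotopes the corresponding part of $A$ off $\partial_E$, lowering $|A\cap\partial N(Y)|$ yet again. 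At the minimum, then, $A\cap\partial N(Y)=A\cap(\partial N(B_Y)-A_Y)$, which is the first assertion. For the second, with $|A\cap(\partial N(B_Y)-A_Y)|$ minimal the surviving loops are essential in $A$ by the first step, and none can be inessential in $\partial N(B_Y)-A_Y$: an innermost disk for such a loop on $\partial N(B_Y)-A_Y$, pushed slightly into $N(X)-Z$, would be a compressing disk for $A$, so incompressibility of $A$ would force the loop to bound a disk in $A$ as well --- a contradiction.

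The main obstacle I expect is this third step: one has to control $A\cap\partial N(Y)$ and $A\cap A_Y$ simultaneously, so that the pieces of $A$ inside $N(E_Y)$ are genuine incompressible surfaces in products and Lemma \ref{product} applies, and one has to verify that the parallelisms invoked here and in the sub-annulus step are realized by ambient isotopies of $A$ that stay inside $N(X)-Z$ (in particular never meet $Z$) and do not recreate previously removed intersections; this is precisely where the standard form of $Z$ and the minimality hypotheses must be used delicately. The cases where a sector of $Y$ is an annulus or a M\"{o}bius band, for which $Z$ need not meet the corresponding product in parallel copies, will need a separate routine check.
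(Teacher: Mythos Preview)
Your route diverges from the paper's, and your step 3 actually overshoots: once you have it, the rest of your argument is vacuous.

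In step 3 you argue that any sub-annulus $A'$ of $A$ lying in $N(Y)-Z$ with $\partial A'\subset\partial N(Y)$ is incompressible there and hence, by the hypothesis coming from $Y\le Z$, parallel into $\partial N(Y)$; pushing across (an innermost such) parallelism lowers $|A\cap\partial N(Y)|$. At the minimum, then, no such $A'$ exists. But $\partial A\subset\partial N(X)$ lies outside $N(Y)$, and the essential loops of $A\cap\partial N(Y)$ cut $A$ into sub-annuli that alternate between $N(X)-\mathrm{int}\,N(Y)$ and $N(Y)-Z$; if there are no inside pieces at all, there are no intersection loops at all. So your step 3 forces $A\cap\partial N(Y)=\emptyset$, which makes the claim hold trivially and renders your steps 4--5 (minimizing $|A\cap A_Y|$, analysing $A\cap N(E_Y)$) empty. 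In effect you have spent the hypothesis ``no essential annulus in $N(Y)-Z$'' inside the proof of the claim, whereas the paper saves it for \emph{after} the claim to derive the contradiction; you have really sketched an alternative route to transitivity that bypasses the claim rather than a proof of the claim as stated.

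By contrast, the paper's proof of the claim is purely geometric and uses neither minimality of $|A\cap\partial N(Y)|$ nor the $Y\le Z$ annulus condition. It exploits only the standard form of $Z$ in $N(Y)$: for each characteristic annulus $A_i\in A_Y$, the two outermost strips $A_{i1},A_{in_i}$ of $A_i-Z$ and the adjacent outermost slabs of $N(E_Y)-Z$ form collars of $\partial N(E_Y)-A_Y$ inside $N(Y)-Z$, and one simply isotopes $A$ out of these collars using the product structure. After this push $A$ may still meet $\partial N(Y)$, but only in $\partial N(B_Y)-A_Y$; minimizing that intersection and invoking incompressibility then gives the essentiality of the remaining loops. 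Your identified ``main obstacle'' (controlling $A\cap\partial N(Y)$ and $A\cap A_Y$ simultaneously) never arises in the paper's argument precisely because it works with these explicit collars instead of a global minimization.
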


\begin{proof}
For each component $A_i$ of the characteristic annulus system $A_Y$, let $A_{i1},\ldots,A_{in_i}$ be the annulus components of $A_i-Z$, where $A_{i1}$ and $A_{in_i}$ meet with $\partial N(Y)$.

\begin{figure}[htbp]
	\begin{center}
	\includegraphics[trim=0mm 0mm 0mm 0mm, width=.5\linewidth]{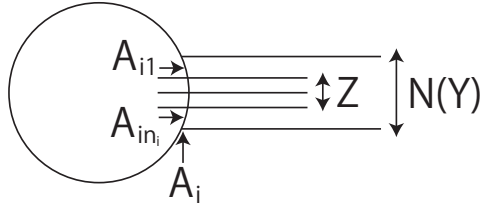}
	\end{center}
	\caption{Configuration of $N(Y)$, $Z$, $A_i$, $A_{i1},\ldots,A_{in_i}$}
	\label{characteristic}
\end{figure}

First we isotope $A$ in $N(X)-Z$ so that $A\cap (A_{i1}\cup A_{in_i})=\emptyset$ for each $A_i$.
Next by using the product structure of $N(E_Y)$, we isotope off $A$ from the components of $N(E_Y)-E_Z$ which meet with $A_{i1}$ and $A_{in_i}$ for all $i$.
Then it holds that $A\cap \partial N(Y)=A\cap (\partial N(B_Y)-A_Y)$.
Moreover, if we take $|A\cap (\partial N(B_Y)-A_Y)|$ minimal up to isotopy, then by the incompressibility of $A$ and $\partial N(B_Y)-A_Y$, $A\cap (\partial N(B_Y)-A_Y)$ consists of loops which are essential in both $A$ and $\partial N(B_Y)-A_Y$.
\end{proof}

By Claim \ref{arrange}, any annulus component of $A\cap N(Y)$ is essential in $N(Y)-Z$.
This contradicts that there exists no essential annulus in $N(Y)-Z$.

\medskip

Antisymmetry: 
Suppose that for $X,Y\in \mathcal{X}$, $[X]\preceq [Y]$ and $[Y]\preceq [X]$.
Then $X\le Y$ and $Y\le X'$, where $X'\in [X]$.
By the transitivity, $X\le X'$ and
$$
X'\subset N(Y)\subset N(X),\ B_{X'}\subset N(B_Y)\subset N(B_X).
$$
We may assume that $Y$ is in a standard form for $X$, and $X'$ is in a standard form for $Y$.
In the following, we show that $\partial N(X')$ is parallel to $\partial N(X)$ in $N(X)$.
It follows that by Lemma \ref{product}, $\partial N(Y)$ is also parallel to $\partial N(X)$ in $N(X)$, and hence $N(Y)\approx N(X)$.
Thus by Theorem \ref{IH}, $[Y]=[X]$.

Since $X'$ is obtained from $X$ by an IH-moves, $\chi(X')=\chi(X)$.
Hence the conditions (1) and (2) of Lemma \ref{euler} are satisfied for $X$ and $X'$.
By the standard form and (1) of Lemma \ref{euler}, $X'\cap N(E_X)$ consists of the following.
For each sector $F\in E_X$, $X'\cap N(F)$ is either:
\begin{enumerate}
\item a single surface parallel to $F$ if $F$ is not an annulus nor a M\"{o}bius band sector.
\item annuli with a form $F\times \{x\}\subset F\times I$ if $F$ is an annulus sector.
\item a M\"{o}bius band with a form $F\tilde{\times} \{0\}\subset F\tilde{\times} I$, or annuli with a form $F\tilde{\times} \{x\}\subset F\tilde{\times} I$ $(x\ne 0)$ if $F$ is a M\"{o}bius band sector, where $\tilde{\times}$ denotes the twisted $I$-bundle.
\end{enumerate}

For each branch $l\in B_X$, a solid torus $N(l)$ can be naturally regarded as a standard fibered torus, that is, an $S^1$-bundle over a disk $D$.
Let $p:N(l)\to D$ be the projection.
The core of $N(l)$ is a singular fiber if $l$ is pure, otherwise it is a regular fiber.
The core of each annulus of $A_X\cap \partial N(l)$, is a regular fiber.
By (2) of Lemma \ref{euler} and (4) of Lemma \ref{standard}, $X'\cap N(B_X)$ consists of the following.
For each branch $l\in B_X$, $X'\cap N(l)$ is either:
\begin{enumerate}
\item a branch $l'\in B_{X'}$ with annuli incident to $l'$, which projects to $C_d$ (or $C_1$ if $l'$ is a singular fiber in $N(l)$) in the base (orbit surface) of the fibered solid torus $N(l)$, where $C_d$ denote the cone over $d$ points.
\item an annulus which projects to an arc in the base of $N(l)$ which is not parallel to $p(A_X\cap \partial N(l))$.
\end{enumerate}

We consider the multibranched surface $X^*$ obtained from $X$ by replacing each sector which is not an annulus nor a M\"{o}bius band sector with $C_d\times S^1$.
The multibranched surface ${X'}^*$ is simultaneously obtained from $X'$.
Then $S=N(X^*)$ is a Seifert fibered 3-manifold with a base orbifold $B$.
Let $p:S\to B$ be the projection.
By the above observations, $X'$ consists of fibers in $S$ and $p({X'}^*)$ is a graph embedded in $B$.

\begin{claim}\label{product}
Let $R$ be the union of regions of $B-N(p({X'}^*))$ each of which contains a component of $\partial B$.
Then $R$ has a product structure $\partial B\times (0,1]$.
\end{claim}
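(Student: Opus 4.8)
The plan is to run the whole argument inside the Seifert fibered space $S=N(X^{*})$, the sole input being the fact --- available now that transitivity of $\le$ has been established --- that there is no essential annulus in $N(X)-X'$. Write $G=p({X'}^{*})$ for the graph in $B$. Since ${X'}^{*}$ is obtained from the connected multibranched surface $X'$ by replacing certain sectors with copies of $C_d\times S^1$, an operation that never separates because all attaching circles on the branches survive, ${X'}^{*}$ is connected, and hence so is $G$. Moreover ${X'}^{*}$ lies in the interior of $S$, so $G$ lies in the interior of $B$ and $\partial N(G)$ is disjoint from $\partial B$; the boundary $\partial S$ is the union of the tori lying over $\partial B$.

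The first step is to reduce the claim to the assertion that \emph{every component $R_0$ of $R$ is an annulus}. Each component of $R$ contains, by definition, at least one component of $\partial B$; it must also meet $\partial N(G)$ in its boundary, for otherwise it would be a whole component of $B$ disjoint from the nonempty graph $G$, contradicting connectedness of $B$. In particular no component of $R$ is a disk. If $R_0$ is an annulus, then one of its two boundary circles is the component $c\subset\partial B$ it contains and the other lies on $\partial N(G)$, so $R_0$ is a collar $c\times(0,1]$ containing no other component of $\partial B$; since each component of $\partial B$ occurs in $R$, this gives $R\cong\partial B\times(0,1]$.

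So suppose, for contradiction, that some component $R_0$ of $R$ is neither a disk nor an annulus, so that $\chi(R_0)\le-1$. A short case analysis on the genus of $R_0$ and on how its boundary circles are split between $\partial B$ and $\partial N(G)$ produces an essential (that is, not boundary-parallel) properly embedded arc $\gamma\subset R_0$ whose endpoints both lie on $\partial R_0\cap\partial N(G)$. Set $A=p^{-1}(\gamma)$, a vertical annulus in $S$ with $\partial A\subset\partial N({X'}^{*};S)$. I claim $A$ is essential in $S-{X'}^{*}$. Incompressibility holds because $\gamma$ is essential in $B$, hence $A$ is incompressible in $S$ unless $S$ is a solid torus; the only case in which $S$ (with nonempty boundary) is a solid torus is when $B$ is a disk orbifold, and then connectedness of $G$ already forces the region of $B-N(G)$ meeting $\partial B$ to be an annulus, so the contradictory situation cannot occur. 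As for boundary-parallelism, $A$ cannot be parallel into $\partial S$ because $\partial A$ lies over $\partial N(G)$, which is disjoint from $\partial B$; and a parallelism of $A$ into a subannulus of ${X'}^{*}$ may be taken vertical and so projects to a bigon in $B-N(G)$ that would exhibit $\gamma$ as boundary-parallel in $R_0$, against the choice of $\gamma$.

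It then remains to return to $N(X)$: passing from $X$ to $X^{*}$ replaces only the handle $\bar F\times I$ of each complicated sector $F$ by a neighborhood of $C_d\times S^1$, a modification local to that handle, while $X'\cap N(F)$ was arranged to be the single parallel copy $\bar F\times\{x\}$; so an essential annulus in $S-{X'}^{*}$ whose boundary meets such a replaced region can be re-routed to an essential annulus in $N(X)-X'$ of the same combinatorial type. This contradicts $X\le X'$, and the claim follows. I expect the two genuine difficulties to be exactly this last transfer step and the clean disposal of the degenerate Seifert fibered bases (disk orbifold, and the closed bases for which $R=\emptyset$, where vertical annuli may fail to be incompressible); connectedness of ${X'}^{*}$, hence of $G$, is the lever I would use to settle the degenerate cases.
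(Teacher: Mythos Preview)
Your overall strategy is sound, but the placement of the endpoints of $\gamma$ creates a genuine mismatch with what you actually need to contradict. Condition~(2) of Definition~\ref{relation} is clarified in the paper to mean: every annulus \emph{properly embedded in $N(X)-X'$} (hence with $\partial A\subset\partial N(X)$) is either compressible or isotopic to a subannulus of $\partial N(X)$. Your arc $\gamma$ has $\partial\gamma\subset\partial N(G)$, so $A=p^{-1}(\gamma)$ has $\partial A\subset\partial N({X'}^{*};S)$, i.e.\ near $X'$ rather than on $\partial N(X)$. Such an $A$ is not properly embedded in $N(X)-X'$ in the sense just described, and therefore does not contradict $X\le X'$, no matter how carefully you argue it is incompressible and not parallel into ${X'}^{*}$. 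Your transfer paragraph addresses only the passage through replaced sectors, not this more basic issue of where $\partial A$ sits.

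The paper's argument avoids this by taking $\partial\gamma\subset\partial B$ instead: if $R$ fails to be a collar $\partial B\times(0,1]$, then some component $R_0$ (which, by the connectedness of $B$ you correctly observed, must meet $\partial N(G)$ and hence is not a disk) is not an annulus, and one finds a properly embedded arc $\gamma\subset R_0$ with $\partial\gamma\subset\partial B$ that is not parallel into $\partial B$. Then $p^{-1}(\gamma)$ is a vertical annulus (or M\"obius band, whose twisted $I$-bundle boundary gives an annulus) with boundary on $\partial S$, and after the transfer it is properly embedded in $N(X)-X'$ with $\partial A\subset\partial N(X)$, directly violating~(2). Your reduction to ``each component is an annulus'' and your connectedness argument are both fine; only the side of $\partial R_0$ on which you anchor $\gamma$ needs to change.
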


\begin{proof}
Suppose that $R$ does not have a product structure $\partial B\times (0,1]$.
Then there exists an arc $\gamma$ properly embedded in $R$ with $\partial \gamma \subset \partial B$ such that $\gamma$ is not parallel to an arc in $\partial B$.
The fiber $p^{-1}(\gamma)$ is an annulus or a M\"{o}bius band properly embedded in $N(X)-X'$.
If $p^{-1}(\gamma)$ is an annulus, then it is an essential annulus in $N(X)-X'$.
This contradicts to the condition (2) in Definition \ref{relation} for $X\le X'$.
If $p^{-1}(\gamma)$ is a M\"{o}bius band, then we take $p^{-1}(\gamma)\tilde{\times} \partial I\subset p^{-1}(\gamma)\tilde{\times} I$ and it is an essential annulus in $N(X)-X'$.
This contradicts to the condition (2) in Definition \ref{relation} for $X\le X'$.
\end{proof}

By Claim \ref{product}, $\partial N(X')$ is parallel to $\partial N(X)$.
It follows that $\partial N(Y)$ is also parallel to $\partial N(X)$ in $N(X)$ and hence $[Y]=[X]$.
\end{proof}

\section{Proof of Theorem \ref{sufficient}}

\begin{proof}[Proof of Theorem \ref{sufficient}]

Suppose that $B_Y$ is atoroidal.
Then each branch of $Y$ is isotopic to the core of $N(l_i)$ for some branch $l_i$ of $X$, and each $N(l_i)$ contains at most one branch of $Y$.

Suppose that $E_Y$ is acylindrical.
If for a sector $F\in E_X$ which is not an annulus nor a pair of pants, $|E_Y\cap N(F)|\ge 2$ holds,
then there exists a vertical essential annulus between two components of $E_Y\cap N(F)$.
This is a contradiction.
Hence, for a sector $F\in E_X$ which is not an annulus nor a pair of pants, $|E_Y\cap N(F)|=1$.

Suppose that for a sector $F\in E_X$ which is a pair of pants, $|E_Y\cap N(F)|\ge 2$.
Let $P_1$ and $P_2$ be two pairs of pants of $E_Y\cap N(F)$ which are adjacent in $N(F)$, and $A$ be a vertical annulus connecting between $P_1$ and $P_2$ such that each component of $\partial A$ is parallel to a component of $\partial P_i$.
Since $E_Y$ is acylindrical, $\partial A$ bounds an annulus $A'$ in a sector of $E_Y$ to which $A$ is parallel, or $A$ is parallel to $A_Y$ relative to $Y$.
In the former case, let $\gamma$ be an arc in $P_1$ which connects two points in $\partial A\cap P_1$ and is not parallel to a subarc in $\partial A\cap P_1$.
Then by a product structure between $P_1$ and $P_2$ and a parallelism between $A$ and $A'$, $\gamma$ extends a compressing disk for $E_Y$.
\begin{figure}[htbp]
	\begin{center}
	\includegraphics[trim=0mm 0mm 0mm 0mm, width=.4\linewidth]{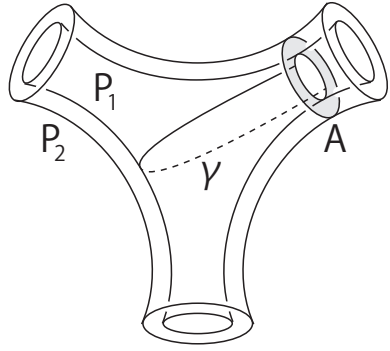}
	\end{center}
	\caption{Configuration of $P_1$, $P_2$, $A$ and $\gamma$}
	\label{pants}
\end{figure}
Therefore we have the latter case.
By repeating this argument for remaining two components of $\partial P_i$, we have the latter cases on them.
This shows that $P_1$ and $P_2$ are extends to mutually parallel pair of pants sectors of $Y$, and 
it contradicts that $Y$ is essential.

As in the proof of Theorem \ref{partial}, by replacing each sector which is not an annulus nor a M\"{o}bius band sector with $C_d\times S^1$, we obtain a multibranched surface $X^*$ from $X$.
The multibranched surface $Y^*$ is simultaneously obtained from $Y$.
Then $S=N(X^*)$ can be regarded as a Seifert fibered 3-manifold with a base orbifold $B$, and $p(Y^*)$ is a graph embedded in $B$, where $p:S\to B$ is the projection.

By Claim \ref{product}, $R$ has a product structure $\partial B\times (0,1]$, where $R$ is the union of regions of $B-N(p(Y^*))$ each of which contains a component of $\partial B$.
However, since $[X]\ne [Y]$, there is a region $R'$ of $B-N(p(Y^*))$ which does not contain any component of $\partial B$, namely, $\partial cl(R')\subset \partial N(p(Y^*))$, where $cl()$ denotes the closure.
The boundary of $p^{-1}(cl(R'))$ consists of annulus or M\"{o}bius band sectors $A_1,\ldots,A_n$ and branches $l_1,\ldots, l_m$ of $Y$.
By the standard form, $cl(R')$ is a neighborhood of a tree embedded in $R'$, that is a disk.
If $\partial cl(R')$ contains $k$ branches $(k\ge 4)$, then there exists an arc $\alpha$ properly embedded in $cl(R')$ such that $\alpha$ is not parallel to a subarc in $p(A_i)$ or $\alpha$ is parallel to $p(A_Y)$ relative to $p(Y^*)$.
\begin{figure}[htbp]
	\begin{center}
	\includegraphics[trim=0mm 0mm 0mm 0mm, width=.6\linewidth]{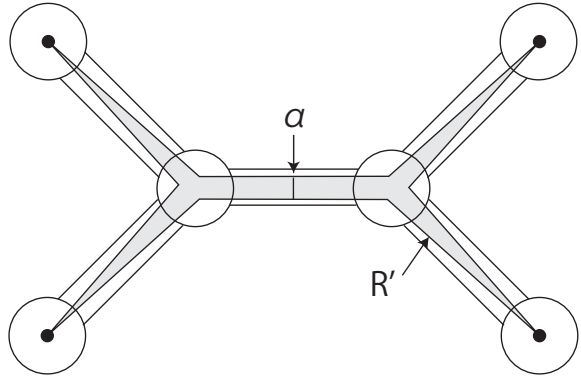}
	\end{center}
	\caption{Configuration of $B$, $R'$, $\alpha$, where $k=4$}
	\label{pants}
\end{figure}
Hence $p^{-1}(\alpha)$ is an essential annulus for $Y$ and it contradicts the supposition that $E_Y$ is acylindrical.
Suppose that $k\le 3$.
If $p^{-1}(R')$ contains a singular fiber, then there exists an arc $\alpha$ properly embedded in $cl(R')$ such that $p^{-1}(\alpha)$ is an essential annulus for $Y$.
Otherwise, one of sectors contained in the boundary of $p^{-1}(cl(R'))$ is excess.
This contradicts the essentiality of $Y$.
\end{proof}





\section{Example}

Let the ambient closed orientable 3-manifold $M$ be the 3-sphere.
We consider four multibranched surfaces $X_1, X_2, X_3, X_4\in \mathcal{X}$ as follows.
For coprime integers $p, q\ge 3$, $B_{X_1}$ is a Hopf link $l_1\cup l_2$ and $E_{X_1}$ consists of a single annulus $A$ such that $f|_{l_1} : a_1 \to l_1$ is $p$-fold and $f|_{l_2} : a_2 \to l_2$ is $q$-fold, where $\partial A=a_1\cup a_2$ and $f:\partial A \to l_1\cup l_2$ is a covering map.
Thus $deg(l_1)=p$ and $deg(l_2)=q$.
Next we take a regular neighborhood $N(l_1)$ and put $l_1'=\partial N(l_1)\cap A$, $A_1=\partial N(l_1)-l_1'$ and $A'=A-N(l_1)$.
The multibranched surface $X_2$ consists of $B_{X_2}=l_1'\cup l_2$ and $E_{X_2}=A'\cup A_1$.
In the same way, we take a regular neighborhood $N(l_2)$ and put $l_2'=\partial N(l_2)\cap A$, $A_2=\partial N(l_2)-l_2'$ and $A''=A-N(l_2)$.
The multibranched surface $X_3$ consists of $B_{X_3}=l_2'\cup l_2$ and $E_{X_3}=A''\cup A_2$.
Finally, put $A'''=A-N(l_1\cup l_2)$.
The multibranched surface $X_4$ consists of $B_{X_4}=l_1'\cup l_2'$ and $E_{X_4}=A'''\cup A_1\cup A_2$.
Then $X_1, X_2, X_3, X_4\in \mathcal{X}$ and we have
\[
[X_1]\prec [X_2], \ [X_1]\prec [X_3], \ [X_2]\prec [X_4], \ [X_3]\prec [X_4], 
\]
and $[X_i]\ne [X_j]$ ($i\ne j$).
Therefore, we have the Hasse diagram as shown in Figure \ref{minimal}.

\begin{figure}[htbp]
	\begin{center}
	\includegraphics[trim=0mm 0mm 0mm 0mm, width=.6\linewidth]{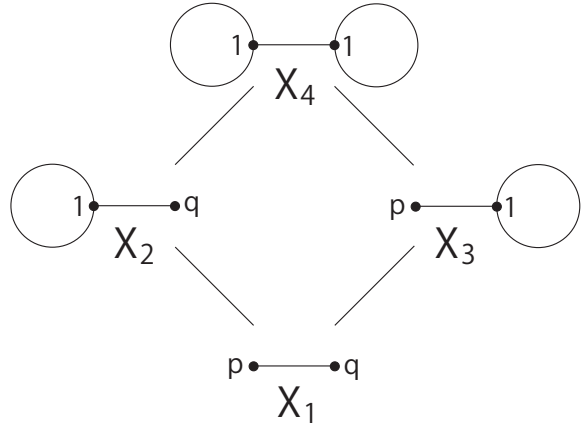}
	\end{center}
	\caption{Hasse diagram for $X_1, X_2, X_3, X_4$}
	\label{minimal}
\end{figure}

\bigskip
\noindent{\bf Acknowledgements.}
I would like to thank to Yuya Koda for pointing out an error on Trisections.

\bibliographystyle{amsplain}

\end{document}